\newtheorem{theorem}{Theorem}[section]
\newtheorem{remark}[theorem]{Remark}
\newenvironment{proof}[1][Proof]{\noindent\textbf{#1.} }{\ \rule{0.5em}{0.5em}}
\begin{document}

\begin{frontmatter}

\title{Multinode Shepard Functions and Tensor Product Polynomial Interpolation: Applications to Digital Elevation Models
}

\author[address-DAM-Granada]{Domingo Barrera}
 \ead{dbarrera@ugr.es}

\author[address-CS]{Francesco Dell'Accio\corref{corrauthor}}
\cortext[corrauthor]{Corresponding author}
 \ead{francesco.dellaccio@unical.it}

\author[address-CS]{Filomena Di Tommaso}
 \ead{filomena.ditommaso@unical.it}

 \author[address-DAM-Granada]{Salah Eddargani}
\ead{seddargani@ugr.es}

 \author[address-DAM-Granada]{María José Ibáñez}
 \ead{mibanez@ugr.es}

\author[address-CS]{Francesco Larosa}
 \ead{francesco.larosa@unical.it}

  \author[address-CS]{Federico Nudo}
  \ead{federico.nudo@unical.it}

  \author[address-DAEGE-Granada]{Juan F. Reinoso}
  \ead{jreinoso@ugr.es}

 \address[address-DAM-Granada]{Department of Applied Mathematics and Institute of Mathematics (IMAG), University of Granada, 18071, Granada, Spain.}
  \address[address-CS]{Department of Mathematics and Computer Science, University of Calabria, Rende (CS), Italy}

 \address[address-DAEGE-Granada]{Department of Architectural and
Engineering Graphic Expression,
University of Granada, Granada, Spain}

\begin{abstract}
 The paper presents an in-depth exploration of the multinode Shepard interpolant on a regular rectangular grid, demonstrating its efficacy in reconstructing surfaces from DEM data. Additionally, we study the approximation order associated to this interpolant and present a detailed algorithm for reconstructing surfaces. Numerical tests showcase the effectiveness of the proposed algorithm. 
\end{abstract}

\begin{keyword}
 Multinode Shepard functions \sep Tensor product interpolation \sep  Digital Elevation Models
\end{keyword}

\end{frontmatter}

\section{Introduction}
Digital Terrain Elevation Models (DETM) serve as essential cartographic tools with different applications across civil engineering, hydrology, agriculture, environmental science, and geology. The effectiveness of these applications hinges on the methods employed for data acquisition and the subsequent transformation of this data into a DETM format. Currently, the predominant methods for data capture are photogrammetry and Light Detection and Ranging (LiDAR), prized for their expansive land coverage capability, a key feature in cartography. This data is predominantly gathered using airborne sensors, resulting in a point cloud which, upon processing, can be converted into two primary forms of DETM: an Irregular Triangle Network (TIN) or a Regular Mesh (Digital Elevation Model - DEM). A TIN is characterized by a set number of points and their corresponding three-dimensional coordinates, along with a network of adjacent points forming the edges of its triangles. Conversely, a DEM is defined by a matrix where each cell's value represents the altitude, with planimetric coordinates deducible from the cell's dimensions and the coordinates of the matrix's initial row and column.

Our focus is directed towards DEMs, a common output of various national and regional cartographic agencies. Despite representing vast territorial areas, DEMs inherently possess a discrete nature, attributed to the matrix-based format in which they are represented. Consequently, when extracting topographical features such as ground slope, data can only be ascertained at the coordinates corresponding to the centers of the matrix cells. This limitation also extends to other derivable variables from DEMs, such as orientation, curvature, or flow accumulation. These variables play a crucial role in determining drainage networks, yet their calculation is impeded by the same spatial uncertainty issues that affect slope determination \cite{Ariza-López2023}. Similar concerns have been widely documented, showing how grid resolution and interpolation choices critically affect terrain representation and hydrological modeling accuracy~\cite{zhang1994digital,chen2016robust,habib2020impact}.

The paper is organized as follows. In Section~\ref{Sec1}, we provide an in-depth description of the multinode Shepard interpolant applied on a regular rectangular grid, showcasing its utility in reconstructing surfaces from DEM data. Following this, in Section~\ref{Sec3}, we delve into the study of the approximation order associated with the multinode Shepard interpolant on this grid. Section~\ref{Sec4} outlines a detailed algorithm devised for the reconstruction of surfaces from DEM data. Finally, in Section~\ref{Sec5}, we present numerical tests, effectively illustrating the performance and efficacy of the proposed algorithm.

\section{Multinode Shepard interpolant on a rectangular grid}
\label{Sec1}
When dealing with data coming from a DEM, we assume that the model is realized by a regular grid, that is, the data are organized in a $m\times n$ matrix $\mathcal{Z}=[z_{ij}]$ of elevations, sampled at the points $(x_i,y_j)$ of the Cartesian grid
\begin{equation}
\mathcal{X}_m\times\mathcal{Y}_n=\left\{x_1,\dots,x_{m}\right\}\times\left\{y_1,\dots,y_{n}\right\},
\end{equation}
where $x_1<x_2<\dots<x_m$ and $y_1<y_2<\dots<y_n$. By assuming that 
\begin{equation}
\label{DEMvalues}
    z_{ij}=f(x_i,y_j), \qquad i=1,\dots,m,\quad j=1,\dots,n,
\end{equation}
given any point $(x,y)\in \mathcal{X}\times\mathcal{Y}$ of the minimal rectangle containing $\mathcal{X}_m\times\mathcal{Y}_n$, we want to approximate the elevation $z=f(x,y)$ with a value $\tilde{z}=\tilde{f}(x,y)$ by estimating the error $e(x,y)=z-\tilde{z}$. 

A number of techniques can be used for this goal, from the simple approach provided by the use of the bilinear polynomial, which fits a hyperbolic paraboloid to the four vertices of the grid cell interpolating linearly along the boundaries of the grid, to more elaborated and performing methods. A variety of interpolation techniques based on bivariate polynomials are assessed in \cite{kidner2003}, ranging from the bilinear polynomial to the biquintic one. General interpolation techniques, namely Inverse Distance Weighted, kringing, ANUDEM, Nearest Neighbor and Spline approaches have been compared in \cite{arun2013} (see also \cite{Ariza-López2023} for a very recent and interesting approach based on spline quasi interpolation in the Bernstein basis).  

The technique we propose aims to realize a $C^\infty$  reconstruction of the surface with polynomial reproduction properties, that guarantees higher order accuracy and fast convergence to the real surface. The key idea is to adapt the multinode Shepard methods to the current situation.

The multinode Shepard operator was recently introduced in a series of papers (see \cite{DELLACCIO2021254} and the references therein) in connection with the problem of scattered data approximation to increase polynomial precision, approximation order and accuracy of the classical Shepard operator \cite{Shepard1968}. To realize these improvements, we cover the node set 
\begin{equation*}
    \mathcal{X} = \left\{ \boldsymbol{x}_1, \dots, \boldsymbol{x}_n \right\} \subset \mathbb{R}^d
\end{equation*}
with a family of subsets $\left\{ \sigma_j \right\}_{j=1}^q \subset \mathcal{X}$, each consisting of $t = t(d,r) = \binom{d+r}{r}$ distinct neighboring nodes. Each subset $\sigma_j$ is selected so that the corresponding local interpolation problem in the polynomial space $\mathbb{P}_r\left(\mathbb{R}^d\right)$ admits a unique solution, i.e., the nodes in $\sigma_j$ are unisolvent for polynomials of total degree at most $r$. We explicitly write each subset as
\begin{equation*}
    \sigma_j = \left\{ \boldsymbol{x}_{j_1}, \dots, \boldsymbol{x}_{j_t} \right\},
\end{equation*}
where the indexes $j_\iota$ are defined through a one-to-one map $\varphi_j : \{1, \dots, t\} \rightarrow \{1, \dots, n\}$, such that $j_\iota = \varphi_j(\iota)$. This compact notation allows for a general and flexible formulation of the multinode Shepard operator. Following \cite{dell2019rate}, we define the multinode Shepard functions associated with this covering as
\begin{equation}
W_{u,j}\left( \boldsymbol{x} \right) = \frac{ \prod\limits_{\iota=1}^{t} \left\| 
\boldsymbol{x} - \boldsymbol{x}_{j_\iota} \right\|_2^{-u} }{
\sum\limits_{l=1}^{q} \prod\limits_{\lambda=1}^{t} \left\| 
\boldsymbol{x} - \boldsymbol{x}_{l_\lambda} \right\|_2^{-u}}, \quad j=1,\dots,q,\quad \boldsymbol{x} \in \mathbb{R}^d,
\label{m_points_basis_functions}
\end{equation}
where $\left\lVert \cdot \right\rVert_2$ is the standard Euclidean norm and $u > 0$ is a fixed parameter.

Let us assume that to the node set $\mathcal{X}$ is associated a set $f$ of functional data $f_1,\dots,f_n\in \mathbb{R}$ obtained by sampling a known function $f(\boldsymbol{x})$ at the points $\boldsymbol{x}_1,\dots,\boldsymbol{x}_n$, respectively, or by experimental measures. Then, by any convenient method, we can compute the polynomial $p_j[f](\boldsymbol{x})\in \mathbb{P}_r(\mathbb{R}^d)$ interpolating functional data $f(\sigma_j)=\left\{f_{j_\iota}\right\}_{\iota=1}^t$ at the nodes of $\sigma_j$ and blend these polynomials by using the multinode Shepard functions. As a result, we get the multinode Shepard operator
\begin{equation}
\mathcal{MS}_{u}[f]\left( \boldsymbol{x}\right) 
=\sum\limits_{j=1}^{q} W_{u ,j}\left( 
\boldsymbol{x}\right) p_j[f]\left( 
\boldsymbol{x}\right),\quad \boldsymbol{x}\in {\mathbb R}^d,
\label{multinode}
\end{equation}
which interpolates the function $f$ at the node set $\mathcal{X}$. Note that for $d\in\mathbb{N}$ and $q=1$ we get the classic Shepard operator. For $d=2$ and $q=3$ or $q=6$, we get the triangular Shepard \cite{dell2016IMA, cavorettofast} or the hexagonal Shepard operator \cite{dellhex2020} respectively. For $d=3$ and $q=6$ we get the tetrahedral Shepard operator \cite{Cavorettotetra}.

In presence of DEM data, the subsets $\sigma_j$ covering the node set $\mathcal{X}_m\times\mathcal{Y}_n$ can be realized easily, by exploiting the regularity of the grid and a well known interpolation technique by polynomials which is suitable for Cartesian grids, namely the tensor product interpolation \cite[Ch. 7]{CheneyLight}. In fact, arbitrary data can be interpolated uniquely by the tensor product space 
\begin{equation*}
\mathbb{P}_r(\mathbb{R})\otimes\mathbb{P}_s(\mathbb{R})=\operatorname{span}\left\{(x,y)\mapsto x^i y^j\ :\ 0\le i\le r,\ 0\le j\le s\right\}
\end{equation*}
on any subset of nodes having the form 
\begin{equation*}
\left\{x_{i_0},\dots,x_{i_r}\right\}\times\left\{y_{j_0},\dots,y_{j_s}\right\},\quad 1\le i_0<\dots<i_r\le m,\quad 1\le j_0<\dots<j_s\le n.
\end{equation*}
For this kind of polynomial interpolation explicit formulas are well known (see, f.e. \cite{CheneyLight}). Here, we will set $x_b=\displaystyle\frac{1}{r+1}\sum_{\lambda=0}^r x_{i_\lambda}$, $y_b=\displaystyle\frac{1}{s+1}\sum_{\mu=0}^s y_{j_\mu}$ and we will use the following Taylor-like representation
\begin{equation}\label{pol:tens}
p(x,y)=\sum_{\lambda=0}^r\sum_{\mu=0}^s a_{\lambda \mu} \left(\frac{x-x_b}{x_{i_\lambda}-x_b}\right)^\lambda \left(\frac{y-y_b}{y_{j_\mu}-y_b}\right)^\mu,
\end{equation}
where the coefficients $a_{\lambda \mu}$ are computed by solving, using Gaussian elimination with partial pivoting, the linear system
\[
V\boldsymbol{a}=\boldsymbol{z}
\]
of Vandermonde matrix 
\begin{equation*}
V=\left[\left(\frac{x_{i_{\alpha}}-x_b}{x_{i_{\lambda}}-x_b}\right)^{\lambda} \left(\frac{y_{j_{\beta}}-y_b}{y_{j_{\mu}}-y_b}\right)^{\mu}\right], \quad \lambda,\alpha=0,\dots,r, \quad \mu,\beta=0,\dots,s,
\end{equation*}
where $\boldsymbol{a}=\left[a_{00},\ldots,a_{r0},a_{01},\ldots,a_{r1},\ldots,a_{0s},\ldots,a_{rs}\right]^T$,  $\boldsymbol{z}=\left[z_{00},\ldots,z_{r0},z_{01},\ldots,z_{r1},\ldots,z_{0s},\ldots,z_{rs}\right]^T.$
 Exact expression for the remainder $R(f)(x,y)=f(x,y)-p(x,y)$ are known thanks to the work of D. D. Stancu. In fact, equation (4.7) of~\cite{stancu1964remainder} states that
\begin{equation}\label{remainderstancuformula}
R(f)(x,y) =\frac{u_r(x)}{(r + 1)!} f^{(r+1, 0)}(\xi, y) + \frac{v_s(y)}{(s + 1)!} f^{(0,s+1)}(x, \eta) - \frac{u_r(x)v_s(y)}{(r + 1)!(s + 1)!} f^{(r+1, s+1)}(\xi, \eta),
\end{equation}
where 
\begin{equation}\label{nodpol}
    u_r(x)=\prod_{\lambda=0}^r (x-x_{i_\lambda}), \qquad v_s(y)=\prod_{\mu=0}^s (y-y_{j_\mu}), \qquad x_{i_0}\le\xi\le x_{i_r}, \qquad y_{j_0}\le \eta\le y_{j_s},
\end{equation}
and
\begin{equation*}
 f^{(r+1, s+1)}=\frac{\partial^{r+s+2} f}{\partial x^{r+1}\partial y^{s+1}}, \qquad (r,s)\in \mathbb{N}_0\times \mathbb{N}_0. 
\end{equation*}
This equation provides a comprehensive expression for the remainder in the tensor product Lagrange interpolation, accounting for errors in both the $x$ and $y$ dimensions, as well as the interaction between these dimensions through the mixed partial derivative term. It can be used to determine a bound of the error of the multinode Shepard method in the tensorial case like that one presented in~\cite{dell2016IMA} for the case of the triangular Shepard method.

For the sake of simplicity let us assume that 
\begin{equation}\label{mod}
    \bmod(m-1,r)=\bmod(n-1,s)=0.
\end{equation}
Then the Cartesian grid $\mathcal{X}_m\times \mathcal{Y}_n$ is covered by the subsets
\begin{equation}\label{sigma_kell}
    \sigma_{k,\ell}=\{x_{(k-1)r+1},\dots,x_{kr+1}\}\times   \{y_{(\ell-1) s+1},\dots,x_{\ell s+1}\}, 
\end{equation}
$k=1,\dots,\operatorname{div}(m-1,r)$, $\ell=1,\dots,\operatorname{div}(n-1,s)$, that is
\begin{equation*}
\mathcal{X}_m\times \mathcal{Y}_n=\bigcup_{k,\ell} \sigma_{k,\ell}  
\end{equation*}
where $\operatorname{div}(\alpha,\beta)$ is the integer quotient for the division of $\alpha$ by $\beta$.  
Then, denoting by $p_{k,\ell}[\cdot](x,y)$ the polynomial~\eqref{pol:tens} based on the nodes of $\sigma_{k,\ell}$, the multinode Shepard operator on the Cartesian grid can be introduced as follows
\begin{equation}
\label{operatorMSu}
\mathcal{MS}_{u }[\cdot]\left( x,y\right) 
=\sum\limits_{k,\ell} W_{u ,k,\ell}\left( 
x,y\right) p_{k,\ell}[\cdot]\left(x,y\right),\quad (x,y)\in {\mathbb R}^2,
\end{equation}
with 
\begin{equation}
W_{u ,k,\ell}\left(x,y\right) =\dfrac{\prod\limits_{i=1, j=1}^{r+1,s+1} 
\left((x-x_{(k-1)r+i})^2+(y-y_{(\ell-1) s+j})^2\right)^{-\frac{u}{2} }}{\sum\limits_{\kappa,\lambda}
\prod\limits_{i=1, j=1}^{r+1,s+1} 
\left((x-x_{(\kappa-1) r+i})^2+(y-y_{(\lambda-1) s+j})^2\right)^{-\frac{u}{2} }}, \qquad k=1,\dots,K, \quad \ell=1,\dots, L,
\label{rectangular_basis_functions}
\end{equation}%
where 
\begin{equation}\label{KeL}
   K=\operatorname{div}(m-1,r), \qquad  L=\operatorname{div}(n-1,s).
\end{equation}
\begin{figure}[h]
    \centering
    \includegraphics[scale=0.26]{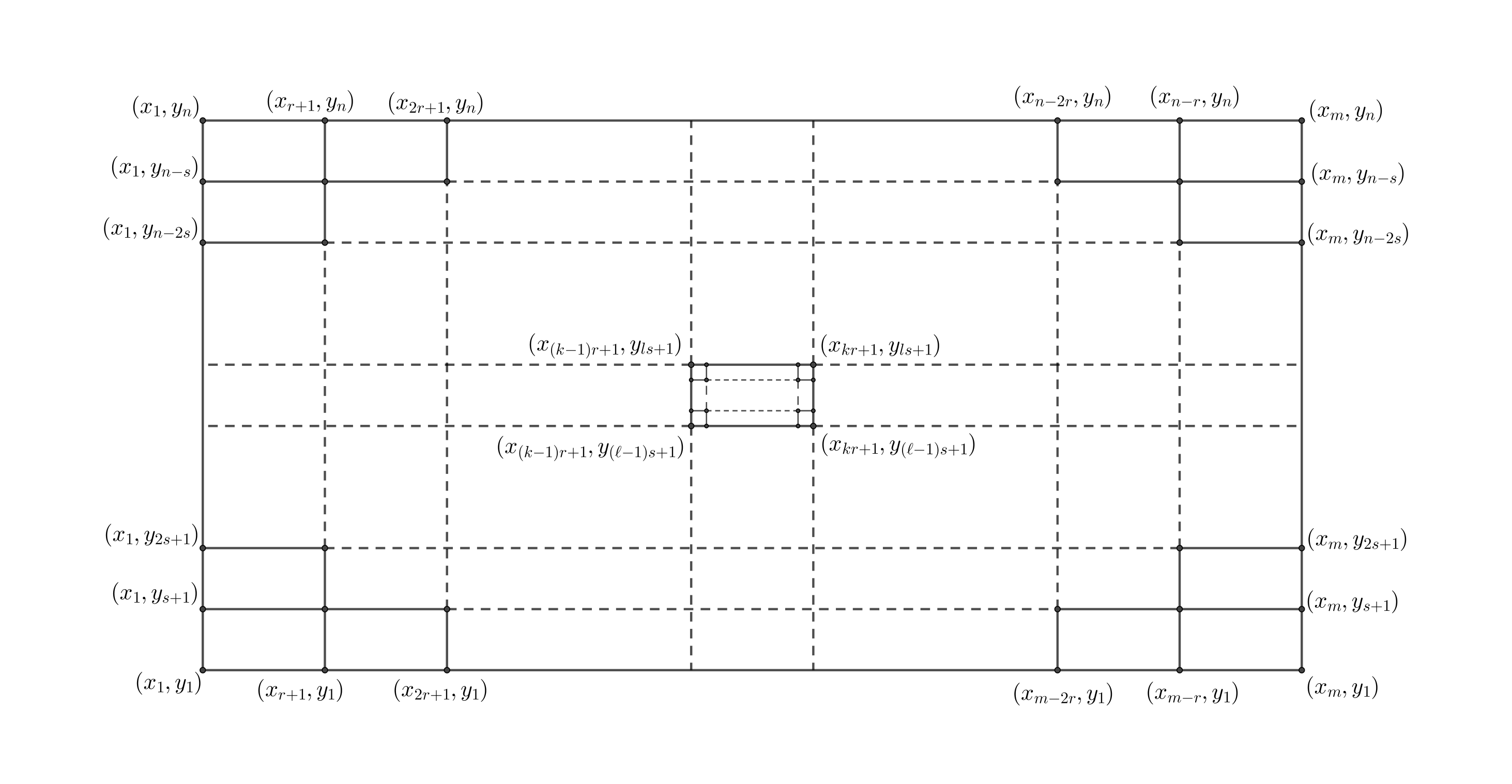}
    \caption{Grid covering of the domain $[0,1]\times[0,1]$ by overlapping rectangular subsets $\sigma_{k,\ell}$.}
    \label{grid covering}
\end{figure}
\begin{remark}
    The multinode Shepard functions defined in \eqref{rectangular_basis_functions} are non-negative and form a partition of unity~\cite{dell2019rate}, that is 
\begin{eqnarray}
   W_{u ,k,\ell}\left(x,y\right) \geq 0, \\
   \sum\limits_{k,\ell} W_{u ,k,\ell}\left(x,y\right)=1.
\end{eqnarray}
Moreover, they satisfy the following properties~\cite{dell2019rate}
\begin{eqnarray}
    &&W_{u ,k,\ell}\left(x_{\iota},y_{\tau}\right)=0,
\end{eqnarray}
for any $k=1,\dots,K$, $\ell=1,\dots,L$ and $(x_{\iota},y_{\tau})\notin \sigma_{k,\ell}.$
\end{remark}
\begin{figure}
    \centering
    \includegraphics[scale=0.32]{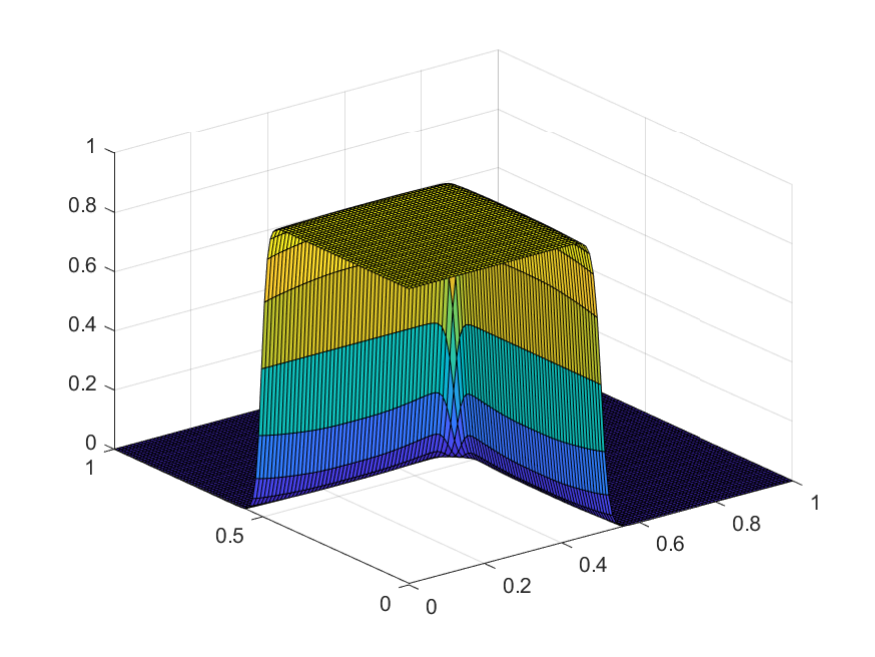}
     \includegraphics[scale=0.32]{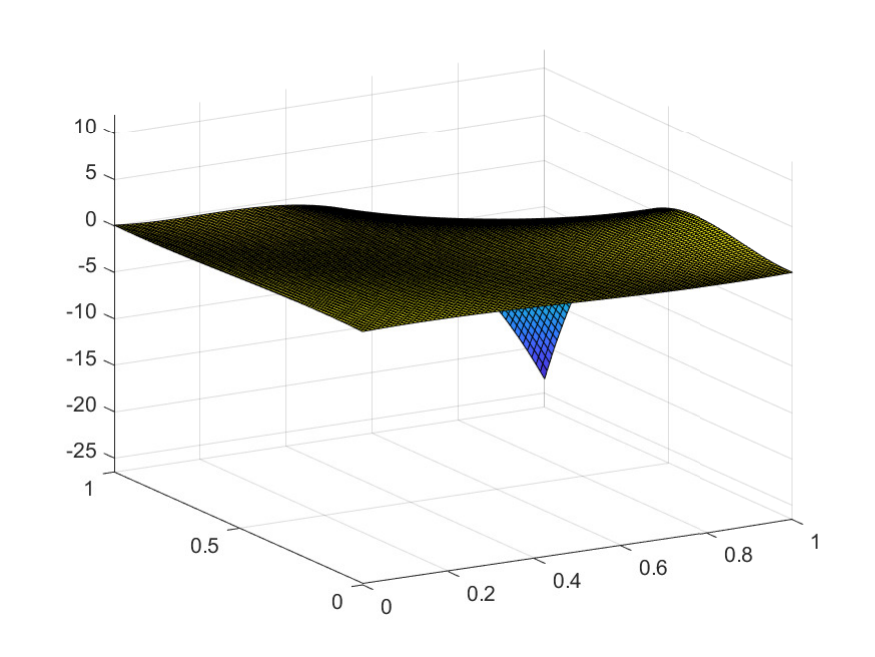}
      \includegraphics[scale=0.32]{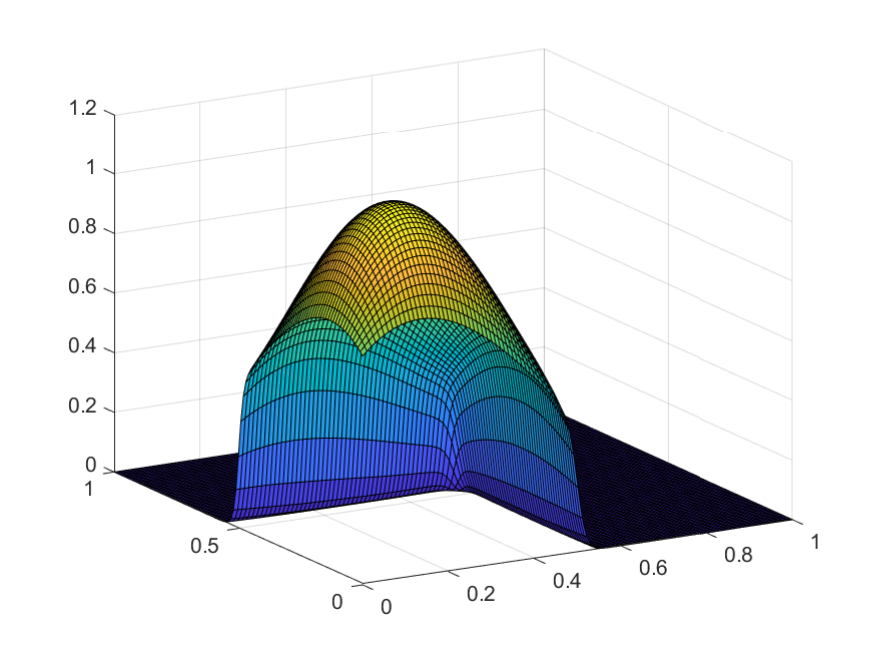}
       
   \includegraphics[scale=0.32]{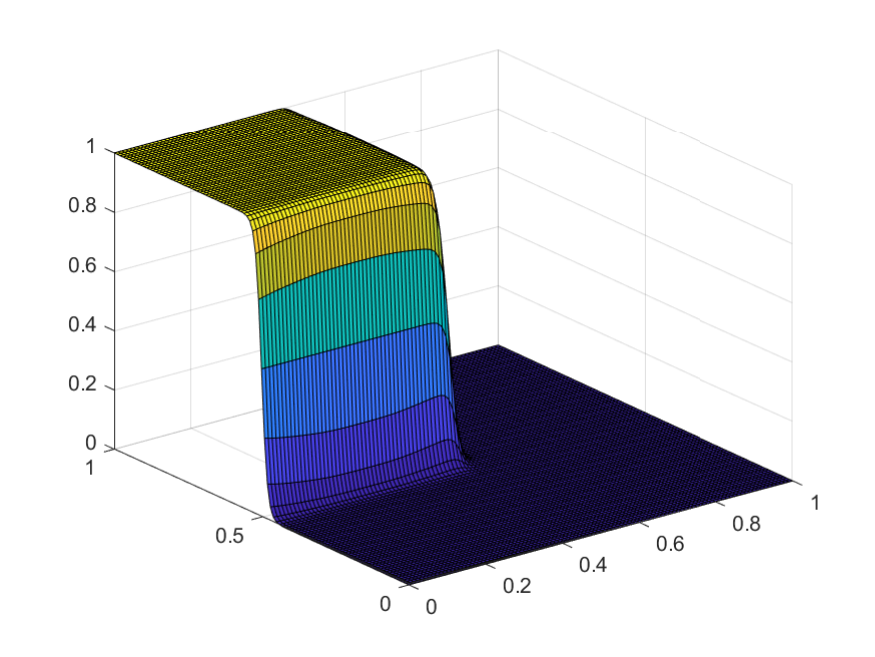}
    \includegraphics[scale=0.32]{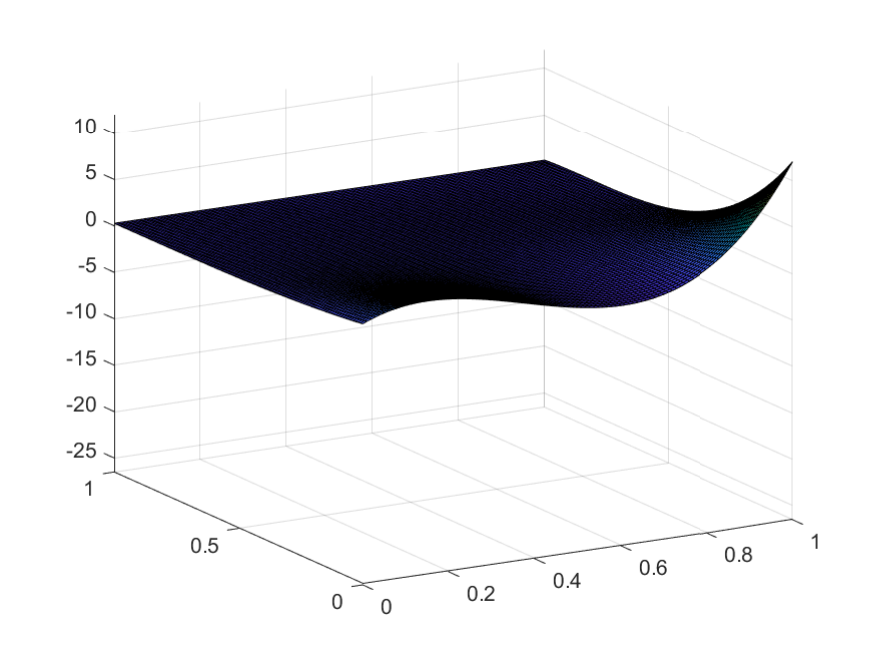}
     \includegraphics[scale=0.32]{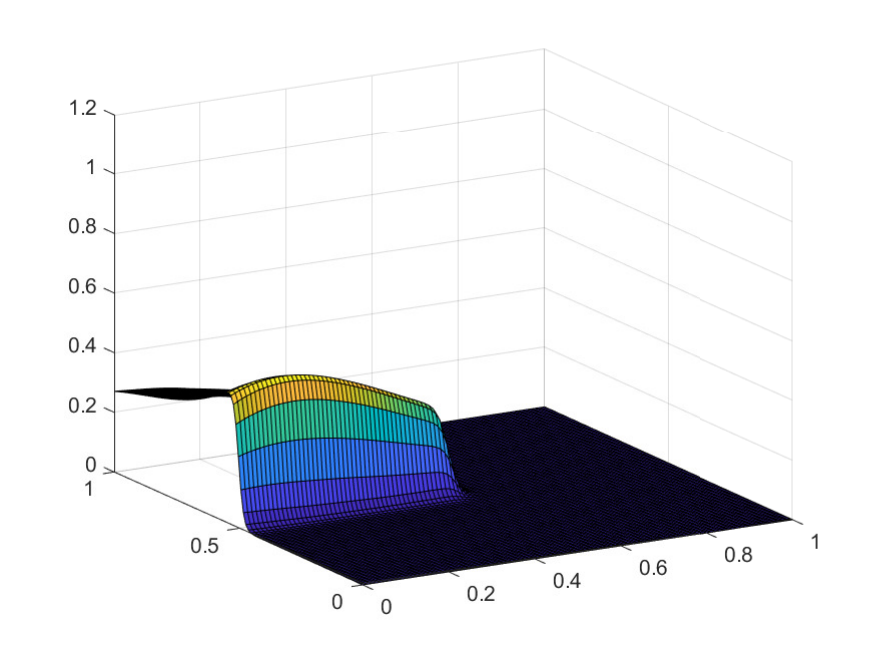}
   \includegraphics[scale=0.32]{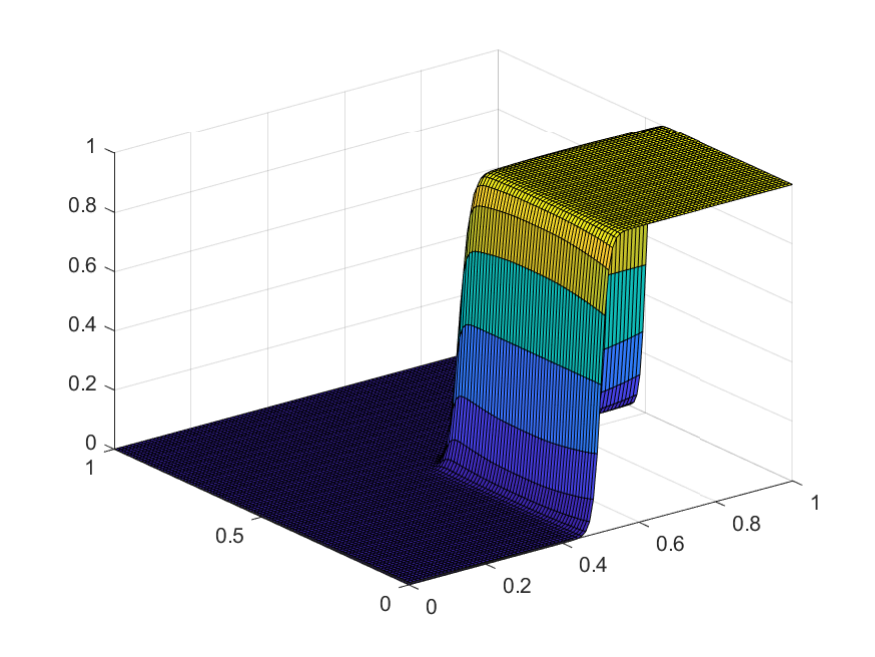}
    \includegraphics[scale=0.32]{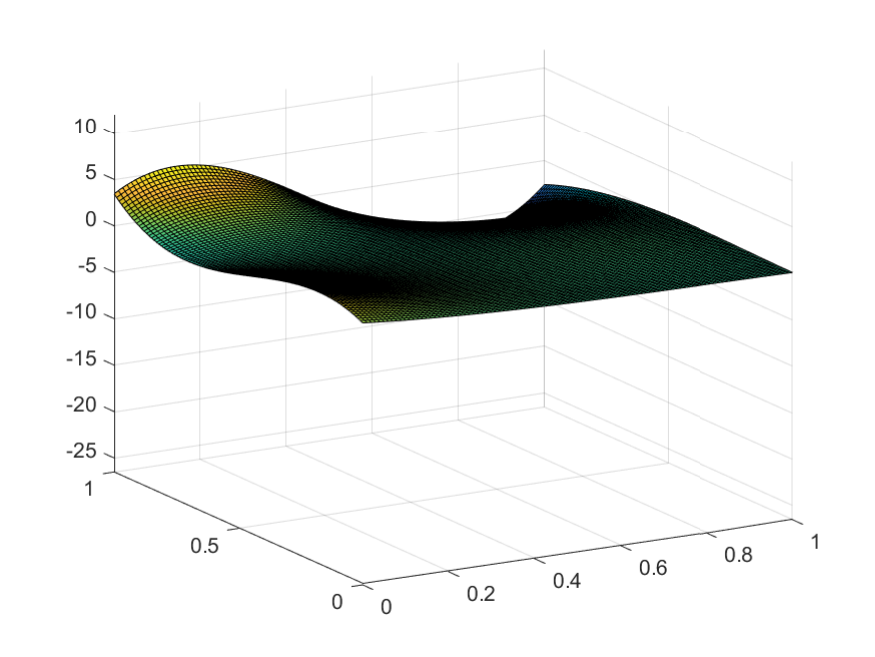}
     \includegraphics[scale=0.32]{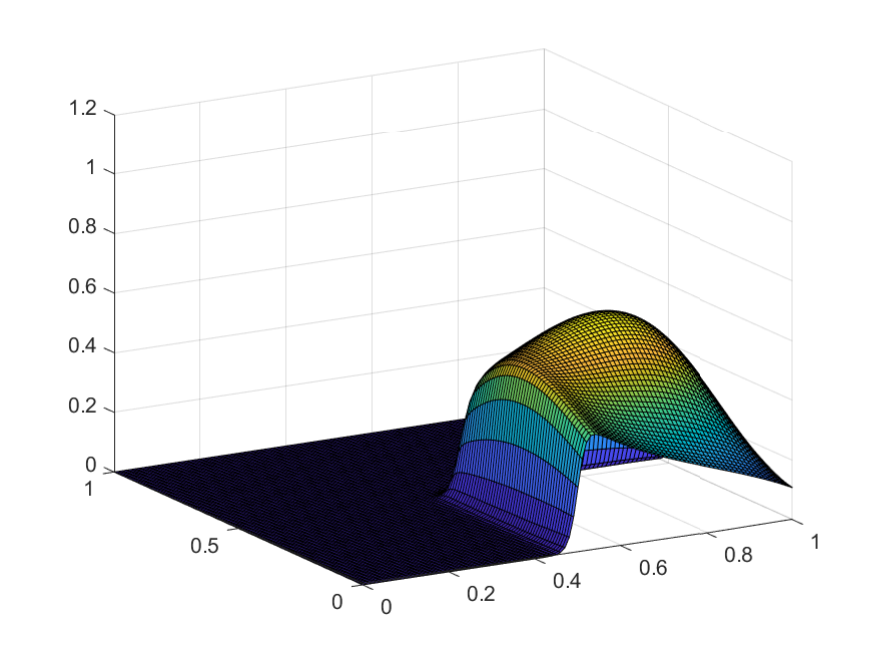}

    \includegraphics[scale=0.32]{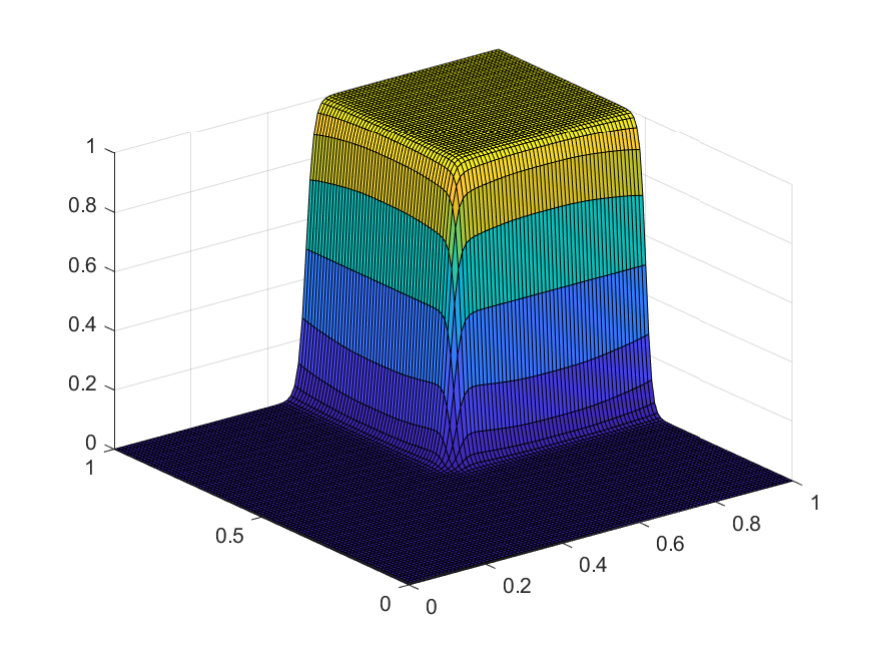}
    \includegraphics[scale=0.32]{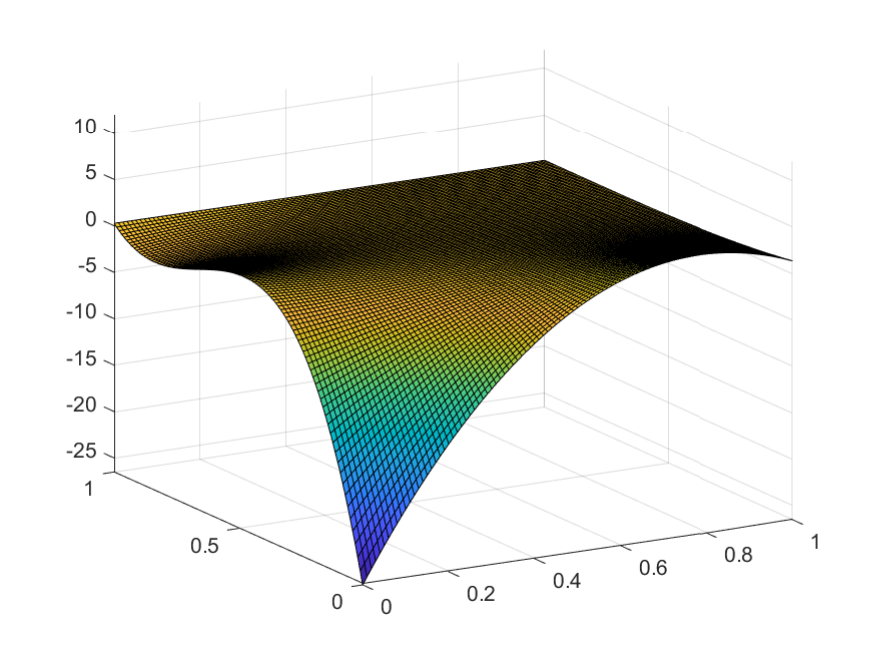}
      \includegraphics[scale=0.32]{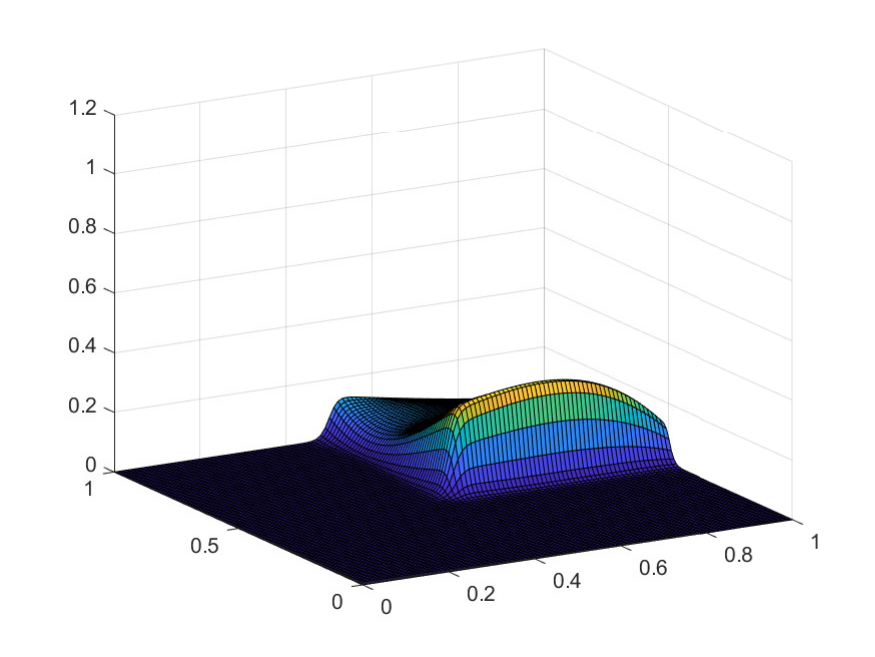}
    \caption{From the top to the bottom. Multinode Shepard basis functions $W_{2,k,\ell}$ (left), the local interpolation polynomial (center), and their pointwise product (right) used to reconstruct the Franke function from an $7\times7$ grid of uniformly spaced nodes on $[0,1]\times[0,1]$.} 
\end{figure}
\begin{figure}
    \centering
    \includegraphics[scale=0.49]{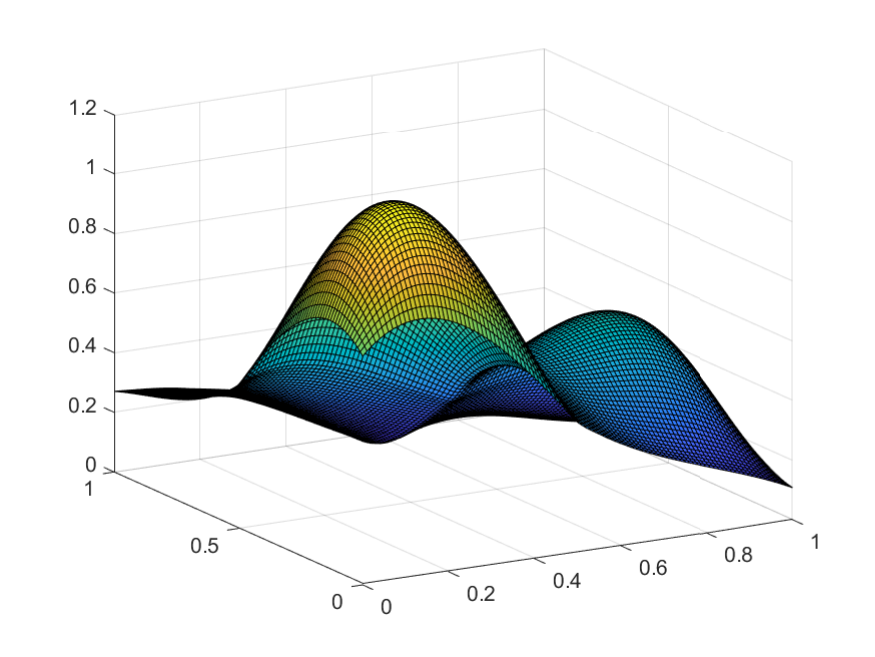}
    \caption{Reconstruction of the Franke function using the multinode Shepard operator $\mathcal{MS}_{2}$ on an $7\times7$ uniform grid over $[0,1]\times[0,1]$.} 
\end{figure}
\section{Approximation order}
\label{Sec3}
In this section, we determine the approximation order of the operator $\mathcal{MS}_{u}$, defined in~\eqref{operatorMSu}. To this aim, some preliminary settings are needed. We denote by $\Omega=\mathcal{X}\times\mathcal{Y}$ the minimal rectangle containing the node set $\mathcal{X}_m\times\mathcal{Y}_n$ and by $\left\lVert \cdot \right\rVert_{\infty}$ the maximum norm, both in the continuous and discrete cases, in particular
\begin{equation*}
    \left\lVert f \right\rVert_{\infty}=\sup_{(x,y)\in\Omega}\left\lvert f(x,y)\right\rvert, \quad f\in C(\Omega),
\end{equation*}
\begin{equation*}
    \left\lVert (x,y) \right\rVert_{\infty}=\max\left\{\left\lvert x\right\rvert, \left\lvert y\right\rvert \right\}, \quad (x,y)\in \mathbb{R}^2.
\end{equation*}
We set $J=KL$ (see~\eqref{KeL} for their definitions) and we consider the bijective map
\begin{equation}
\begin{array}{rcl}
\gamma: \{1,\dots,J\} &\rightarrow& \left\{1,\dots,K\right\} \times\left\{1,\dots,L\right\}
\\
j &\mapsto& (k,\ell)
\end{array}
\label{pilinch9C}
\end{equation}
where $k=\operatorname{div}(j-1,L)+1$, 
$\ell=\operatorname{mod}(j-1,L)+1$ are the indexes of the rectangular grid which we denote by $\sigma_j=\left\{\boldsymbol{x}_{j_{\iota}}\right\}_{\iota=1}^t$ to make use of the setting~\eqref{m_points_basis_functions} and~\eqref{multinode} for the operator~\eqref{operatorMSu}, with $q=J$, $t=(r+1)(s+1)$ and $\boldsymbol{x}=(x,y)$.
As usual, $\operatorname{div}(\alpha,\beta)$ and   $\bmod(\alpha,\beta)$ are the integer quotient and the remainder for the division of $\alpha$ by $\beta$, respectively.   
We denote by $ C^{r+1,s+1}(\Omega)$ the space of continuous functions on $\Omega$, with continuous partial derivatives up to the orders $r+1$ with respect to $x$ and $s+1$ with respect to $y$, and we set 
 \begin{equation} \label{maxfuns}
\Delta=\max\left\{\left\|f^{(r+1,0)}\right\|_{\infty},\left\|f^{(0,s+1)}\right\|_{\\infty},\left\|f^{(r+1,s+1)}\right\|_{\infty}\right\}.     
 \end{equation}
\begin{theorem}
    Let $f\in C^{r+1,s+1}(\Omega)$ and  $$u>\frac{3+r+s}{t}.$$ Then
    \begin{equation*}
        \left\|f-\mathcal{MS}_u[f]\right\|_{\infty}\leq \Delta C \left(l_{\max}\right)^{\delta_{\min}}, 
    \end{equation*}
 where $l_{\max}$ denotes the maximum of the lengths of the sides of the rectangular grid $\sigma_j$, $\delta_{\min}=\min\{r+1,s+1\}$ and $C=C(\sigma_j,u)$ is a positive constant which depends only on $\sigma_j$ and $u$. 
\end{theorem}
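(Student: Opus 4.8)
The plan is to use that the functions $W_{u,j}$ form a non-negative partition of unity, so that the global error collapses into a weighted average of the \emph{local} tensor-product interpolation errors, and then to balance the polynomial growth of each local remainder, controlled by the Stancu formula~\eqref{remainderstancuformula}, against the algebraic decay of the corresponding weight. Adopting the compact indexing $\sigma_j$, $j=1,\dots,J$, introduced via the relabeling~\eqref{pilinch9C}, and writing $\boldsymbol{x}=(x,y)$, the identity $\sum_{j=1}^{J}W_{u,j}\equiv 1$ together with $W_{u,j}\ge 0$ (both recorded in the Remark following~\eqref{rectangular_basis_functions}) gives
\begin{equation*}
f(\boldsymbol{x})-\mathcal{MS}_u[f](\boldsymbol{x})=\sum_{j=1}^{J}W_{u,j}(\boldsymbol{x})\,R_j(f)(\boldsymbol{x}),\qquad R_j(f):=f-p_j[f],
\end{equation*}
and hence
\begin{equation*}
\bigl|f(\boldsymbol{x})-\mathcal{MS}_u[f](\boldsymbol{x})\bigr|\le\sum_{j=1}^{J}W_{u,j}(\boldsymbol{x})\,\bigl|R_j(f)(\boldsymbol{x})\bigr|.
\end{equation*}
Since $\mathcal{MS}_u[f]$ interpolates $f$ on $\mathcal{X}_m\times\mathcal{Y}_n$, the bound is trivial at the nodes, where the weights have removable singularities; elsewhere the weights are smooth and the estimate below is meaningful.

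Next I would substitute the Stancu representation~\eqref{remainderstancuformula}, pass to absolute values, and use the definition~\eqref{maxfuns} of $\Delta$ to obtain the pointwise bound
\begin{equation*}
\bigl|R_j(f)(\boldsymbol{x})\bigr|\le\Delta\left(\frac{|u_r(x)|}{(r+1)!}+\frac{|v_s(y)|}{(s+1)!}+\frac{|u_r(x)|\,|v_s(y)|}{(r+1)!(s+1)!}\right),
\end{equation*}
with $u_r,v_s$ the nodal polynomials~\eqref{nodpol} attached to $\sigma_j$. I would then split the sum over $j$ into the $O(1)$ subsets whose closure contains $\boldsymbol{x}$ and the remaining far subsets. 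For a containing subset every factor of $u_r$ and $v_s$ is at most $l_{\max}$, so $|u_r(x)|\le l_{\max}^{\,r+1}$ and $|v_s(y)|\le l_{\max}^{\,s+1}$; for a fine mesh the smallest exponent dominates, yielding $|R_j(f)(\boldsymbol{x})|\le C\Delta\,l_{\max}^{\,\delta_{\min}}$ with $\delta_{\min}=\min\{r+1,s+1\}$, and since $W_{u,j}\le 1$ this near part already carries the asserted order.

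The delicate part is the far tail. Discarding in the denominator of~\eqref{rectangular_basis_functions} all but one containing subset $\sigma_{j_0}$ (whose $t$ nodes lie within $O(l_{\max})$ of $\boldsymbol{x}$, while those of a far $\sigma_j$ sit at distance $\gtrsim D_j:=\operatorname{dist}(\boldsymbol{x},\sigma_j)$) produces the decay estimate
\begin{equation*}
W_{u,j}(\boldsymbol{x})\le\frac{\prod_{\iota=1}^{t}\left\|\boldsymbol{x}-\boldsymbol{x}_{j_\iota}\right\|_2^{-u}}{\prod_{\iota=1}^{t}\left\|\boldsymbol{x}-\boldsymbol{x}_{j_{0,\iota}}\right\|_2^{-u}}\le C\left(\frac{l_{\max}}{D_j}\right)^{ut}.
\end{equation*}
Since the dominant remainder term grows like $D_j^{\,r+s+2}$, each far product is $O\!\left(\Delta\,l_{\max}^{\,r+s+2}(D_j/l_{\max})^{\,r+s+2-ut}\right)$. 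Organizing the far subsets into shells by integer grid distance $\nu\ge 1$, of which there are $O(\nu)$ at level $\nu$ with $D_j\sim\nu\,l_{\max}$, and summing over the $O(1/l_{\max})$ shells that fit in $\Omega$, the far contribution is bounded by $C\Delta\,l_{\max}^{\,r+s+2}\sum_{\nu\ge 1}\nu^{\,r+s+3-ut}$, truncated at $\nu\sim 1/l_{\max}$. A short computation shows that under the hypothesis $ut>3+r+s$ this total is of order $l_{\max}^{\,\min\{r+s+2,\,ut-2\}}$, up to a logarithmic factor in the critical case, and that
\begin{equation*}
\min\{r+s+2,\,ut-2\}>\delta_{\min},
\end{equation*}
so the far tail is of strictly higher order than the near part. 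Collecting the two pieces yields $\left\|f-\mathcal{MS}_u[f]\right\|_\infty\le\Delta\,C\,l_{\max}^{\,\delta_{\min}}$ with $C=C(\sigma_j,u)$; note that for $r=s=0$ the threshold reduces to $u>3$ and is sharp for this argument.

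The main obstacle I anticipate is precisely this far-field bookkeeping: one must make the comparison between the weight decay $(l_{\max}/D_j)^{ut}$ and the polynomial growth $D_j^{\,r+s+2}$ of the Stancu remainder uniform over all $\boldsymbol{x}\in\Omega$, including points close to a node where individual distance factors degenerate, and verify that the shell counting together with the truncation at $\nu\sim 1/l_{\max}$ turns the (possibly divergent) series into a total of the stated higher order exactly when $u>(3+r+s)/t$. Some additional care is needed to absorb the $r,s$-dependent geometric constants, the factorials, and the critical-case logarithm into a single constant $C=C(\sigma_j,u)$ without destroying uniformity in the mesh size $l_{\max}$.
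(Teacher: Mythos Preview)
Your proposal is correct and follows essentially the same route as the paper: the partition-of-unity identity reduces the error to a weighted sum of local Stancu remainders~\eqref{remainderstancuformula}, a square-annulus (shell) decomposition centred at $\boldsymbol{x}$ separates near from far subsets, and each far weight is bounded by comparing it to that of a nearest subset $\sigma_{j^{\min}}$. The only minor variation is in the endgame---you exploit the truncation at $\nu\sim 1/l_{\max}$ to show the far tail is of strictly higher order $l_{\max}^{\min\{r+s+2,\,ut-2\}}$, whereas the paper simply pulls out $l_{\max}^{\delta_{\min}}$ from every term and bounds the residual $\theta$-sum by the convergent infinite series $\sum_\theta (2\theta+3)^{r+s+2}/(2\theta-1)^{tu}$, absorbing it into the constant $C$.
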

\begin{proof}
      If $\tilde{\boldsymbol{x}}=(\tilde{x},\tilde{y})\in \mathbb{R}^2$, we set
      \begin{equation*}
         \mathcal{Q}_{h}\left(\tilde{\boldsymbol{x}}\right)=\left\{\boldsymbol{x}\in \mathbb{R}^2\, : \, \tilde{x}-h<x<\tilde{x}+h,\quad  
         \tilde{y}-h<y<\tilde{y}+h
         \right\}.
      \end{equation*}
We further set
\begin{equation*}
    M_h= \sup_{\tilde{\boldsymbol{x}}\in\Omega}\#\left(\left\{\sigma_j\, :\,  \sigma_j \cap \mathcal{Q}_{h}({\boldsymbol{x}}) \neq \emptyset\right\}\right) 
\end{equation*}
where $\#\left(\cdot\right)$ is the cardinality operator.
 We fix $\boldsymbol{x}=({x},{y})\in\Omega$ and consider the minimal covering $\{U_{\theta}\}_{\theta=1}^{\Theta}$  
      of the set $\mathcal{X}_m\times\mathcal{Y}_n$ by square annuli (see Figure \ref{square annuli})
      \begin{equation*}
U_{\theta}=\bigcup_{\substack{\nu\in\mathbb{Z}^2 \\ \|\nu\|=\theta}} \mathcal{Q}_{l_{\max}}(\boldsymbol{x}+ l_{\max}\nu).      
\end{equation*} 
\begin{figure}[h]
    \centering
    \includegraphics[scale=0.6]{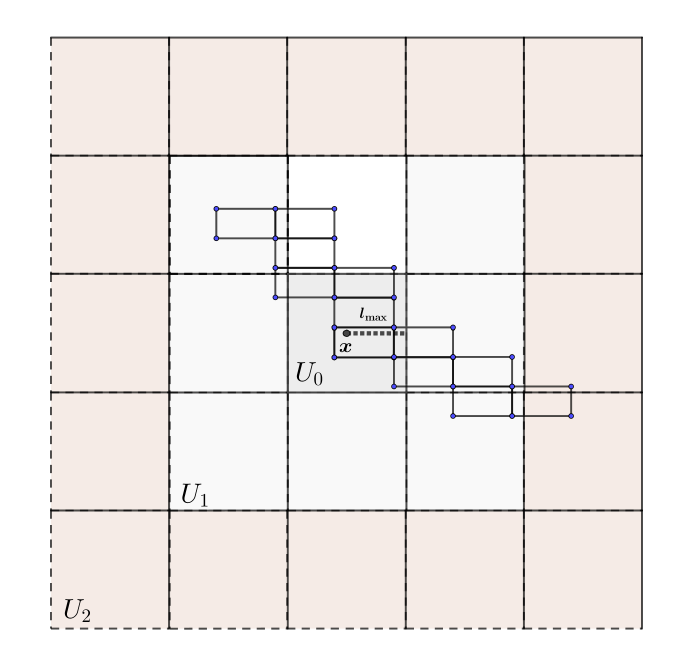}
    \caption{Relative positions of some rectangular grids $\sigma_j$ within the square annuli covering $\{U_{\ell}\}_{\ell\in \mathbb{N}}$.}
    \label{square annuli}
\end{figure}

Noting that $U_{\theta}$ is composed of $8\cdot \theta$ congruent copies of $\mathcal{Q}_{l_{\max}}({\boldsymbol{x}})$, the number of rectangular grids $\sigma_j$ with at least one vertex in $U_{\theta}$, $\theta=1,2,\dots,\Theta$ is bounded by $8 \cdot \theta \cdot M_{l_{\max}}$. 

Using~\eqref{multinode} and leveraging both the partition of unity and the non-negative properties of the multinode Shepard functions~\cite{dell2019rate}, we obtain
	\begin{equation}\label{err}
		\begin{aligned}
	e(\boldsymbol{x}):=&\left\lvert f(\boldsymbol{x})-\mathcal{MS}[f](\boldsymbol{x})\right\rvert\\
	=&\left\lvert \sum_{j=1}^J	W_{u ,j}\left( \boldsymbol{x}\right)f(\boldsymbol{x})-\sum_{j=1}^J	W_{u ,j}\left( \boldsymbol{x}\right)p_{j}(\boldsymbol{x})\right\rvert \\
	\leq&\sum_{j=1}^J\left\lvert f(\boldsymbol{x})-p_{j}(\boldsymbol{x})\right\rvert W_{u ,j}\left( \boldsymbol{x}\right).
\end{aligned}
\end{equation}
Then, by~\eqref{remainderstancuformula} and~\eqref{nodpol} and by easy computations, we get
\begin{equation}\label{err1}
	\begin{aligned}
		\left\lvert f(\boldsymbol{x})-p_{j}(\boldsymbol{x})\right\rvert&\leq \frac{\left\lvert u_r(x)\right\rvert }{(r+1)!}\left\lvert f^{(r+1,0)}(\xi,y)\right\rvert +\frac{\left\lvert v_s(y)\right\rvert}{(s+1)!}\left\lvert f^{(0,s+1)}(x,\eta)\right\rvert+\frac{\left\lvert u_r(x)\right\rvert \left\lvert v_s(y)\right\rvert}{(r+1)!(s+1)!} \left\lvert f^{(r+1,s+1)}(\xi,\eta)\right\rvert \\
		&\leq \Delta \left(\frac{\left\lvert u_r(x)\right\rvert}{(r+1)!}+\frac{\left\lvert v_s(y)\right\rvert}{(s+1)!}+\frac{\left\lvert u_r(x)\right \rvert \left\lvert v_s(y)\right\rvert}{(r+1)!(s+1)!}\right)\\
  &\leq  \Delta \left(\frac{\prod\limits_{\lambda=0}^r \left\|\boldsymbol{x}-\boldsymbol{x}_{j_\lambda}\right\|_{\infty}}{(r+1)!}+\frac{\prod\limits_{\mu=0}^s \left\|\boldsymbol{x}-\boldsymbol{x}_{j_{\mu}}\right\|_{\infty}}{(s+1)!}+\frac{\prod\limits_{\lambda=0}^r \left\|\boldsymbol{x}-\boldsymbol{x}_{j_\lambda}\right\|_{\infty}\prod\limits_{\mu=0}^s \left\|\boldsymbol{x}-\boldsymbol{x}_{j_{\mu}}\right\|_{{\infty}}}{(r+1)!(s+1)!}\right),
  \end{aligned}
\end{equation}
where $\Delta$ is defined in~\eqref{maxfuns}. 

We denote by $R_0$ the set of all rectangular grids $\sigma_j$ with at least one vertex in $U_0$. For $\theta= 1,\dots,\Theta$, we further denote by $R_{\theta}$ the set of all rectangular grids $\sigma_j$ with at least one vertex in $U_{\theta}$ and no vertex in $U_{\theta-1}$. Note that by construction,
\begin{equation*}
    \bigcup_{\theta=0}^\Theta R_{\theta}=\bigcup_{j=1}^J \sigma_{j} \quad \text{ and } \quad \bigcap_{\theta=0}^\Theta R_{\theta}=\emptyset.
\end{equation*} 
By assuming that $\boldsymbol{x}\notin\bigcup\limits_{j=1}^J \sigma_j$,  let $j^{\min}\in \{1,\dots, J\}$ such that 
\begin{equation*}
 \prod\limits_{\iota=1}^{t}\left\|\boldsymbol{x}-\boldsymbol{x}_{j^{\min}_\iota}\right\|_{{2}}= \underset{j=1,\dots,J}{\min} 	\prod\limits_{\iota=1}^{t}\left\Vert \boldsymbol{x}-\mathbf{\boldsymbol{x}}_{j_{\iota}}\right\Vert_{{2}}.   
\end{equation*}
Observe that, if $\sigma_{j} \in R_0$, we have 
\begin{equation*}
\prod\limits_{\iota=1}^{t}\left\|\boldsymbol{x}-\boldsymbol{x}_{j_\iota}\right\|_2 \leq \left(\sqrt{2}\right)^t \prod\limits_{\iota=1}^{t}\left\|\boldsymbol{x}-\boldsymbol{x}_{j_\iota}\right\|_{\infty}\leq ( 3 \sqrt{2} l_{\max})^{t},    
\end{equation*}
while if $\sigma_j \in R_{\theta}$, $\theta=1,\dots, \Theta$, then
\begin{equation*}
((2\theta-1) l_{\max})^{t} \leq \prod\limits_{\iota=1}^{t}\left\Vert \boldsymbol{x}-\mathbf{\boldsymbol{x}}_{j_{\iota}}\right\Vert_{\infty}  \leq \prod\limits_{\iota=1}^{t}\left\Vert \boldsymbol{x}-\mathbf{\boldsymbol{x}}_{j_{\iota}}\right\Vert _2\leq \left(\sqrt{2}\right)^t \prod\limits_{\iota=1}^{t}\left\Vert \boldsymbol{x}-\mathbf{\boldsymbol{x}}_{j_{\iota}}\right\Vert_{\infty} 
 \leq \left((2\theta+3)\sqrt{2}l_{\max}\right)^{t}.    
\end{equation*}

Then, we have
\begin{equation*}
		W_{u ,j}\left( \boldsymbol{x}\right)=\dfrac{\prod\limits_{\iota=1}^{t}\left\Vert 
\boldsymbol{x}-\mathbf{\boldsymbol{x}}_{j_{\iota}}\right\Vert_{{2}} ^{-u }}{\sum\limits_{l=1}^{J}%
\prod\limits_{\iota=1}^{t}\left\Vert \boldsymbol{x}-\mathbf{\boldsymbol{x}}_{l_{\iota}}\right\Vert_{2}
^{-u}}\leq \prod\limits_{\iota=1}^{t}\frac{\left\|\boldsymbol{x}-\boldsymbol{x}_{j_\iota}\right\|_{{2}}^{-u}}{\left\|\boldsymbol{x}-\boldsymbol{x}_{j^{\min}_\iota}\right\|_{{2}}^{-u}}\leq \left\{ \begin{array}{ll}
			1, & \sigma_j \in R_0,\\
			\frac{\left(3 \sqrt{2}\right)^{tu}}{(2\theta-1)^{tu}},& \sigma_j \in R_{\theta}.
		\end{array}\right.
\end{equation*}
Further assuming without loss of generality that $\boldsymbol{x}_{j_1}\in R_{\theta}$ for $ \sigma_{j}\in R_{\theta}$, so that $\left\lVert \boldsymbol{x}- \boldsymbol{x}_{j_1}\right\rVert_{{\infty}}\leq l_{\max}$ for each $\sigma_j \in R_0$ and $\left\lVert \boldsymbol{x}-\boldsymbol{x}_{j_1}\right\rVert_{{\infty}}\leq (2\theta+1)l_{\max}$ for each $\sigma_j \in R_{\theta}$, $\theta=1,\dots, \Theta$, by~\eqref{err} and~\eqref{err1} and by easy computations, it results
\begin{eqnarray*}
    	e(\boldsymbol{x}) & \leq& \Delta \left( \sum\limits_{\sigma_j \in R_{0}}\left(\frac{\prod\limits_{\lambda=0}^r \left\lVert\boldsymbol{x}-\boldsymbol{x}_{j_\lambda}\right\rVert_{{{\infty}}}}{(r+1)!}+\frac{\prod\limits_{\mu=0}^s \left\lVert\boldsymbol{x}-\boldsymbol{x}_{j_{\mu}}\right\rVert_{{{\infty}}}}{(s+1)!}+\frac{\prod\limits_{\lambda=0}^r \left\lVert\boldsymbol{x}-\boldsymbol{x}_{j_\lambda}\right\rVert_{{{\infty}}}\prod\limits_{\mu=0}^s \left\lVert\boldsymbol{x}-\boldsymbol{x}_{j_{\mu}}\right\rVert_{{{\infty}}}}{(r+1)!(s+1)!}\right)\right.\\
     &&\left.+\sum\limits_{\theta=1}^{\Theta} \sum\limits_{\sigma_j \in R_{\theta}}\left(\frac{\prod\limits_{\lambda=0}^r \left\lVert\boldsymbol{x}-\boldsymbol{x}_{j_\lambda}\right\rVert_{{{\infty}}}}{(r+1)!}+\frac{\prod\limits_{\mu=0}^s \left\lVert\boldsymbol{x}-\boldsymbol{x}_{j_{\mu}}\right\rVert_{{{\infty}}}}{(s+1)!}+\frac{\prod\limits_{\lambda=0}^r \left\lVert\boldsymbol{x}-\boldsymbol{x}_{j_\lambda}\right\rVert_{{{\infty}}}\prod\limits_{\mu=0}^s \left\lVert\boldsymbol{x}-\boldsymbol{x}_{j_{\mu}}\right\rVert_{{{\infty}}}}{(r+1)!(s+1)!}\right) 	\frac{\left(3\sqrt{2}\right)^{tu}}{(2\theta-1)^{tu}}   \right) \\
     &\leq& \Delta \left(M_h\left(\frac{l_{\max}^{r+1}}{(r+1)!}+ \frac{l_{\max}^{s+1}}{(s+1)!}+ \frac{l_{\max}^{r+s+2}}{(r+1)!(s+1)!} \right)\right.\\
     && \left.  + \sum\limits_{\theta=1}^{\Theta} \sum\limits_{\sigma_j \in R_{\theta}}\left(\frac{(2\theta+3)^{r+1}l_{\max}^{r+1}}{(r+1)!}+ \frac{(2\theta+3)^{s+1}l_{\max}^{s+1}}{(s+1)!}+\frac{(2\theta+3)^{r+s+2}l_{\max}^{r+s+2}}{(r+1)!(s+1)!} \right)	\frac{\left(3\sqrt{2}\right)^{tu}}{(2\theta-1)^{tu}} \right) \\
		& \leq & \Delta l_{\max}^{\delta_{\min}} \left(M_h\left(\frac{1}{(r+1)!}+ \frac{1}{(s+1)!}+ \frac{1}{(r+1)!(s+1)!} \right)\right.\\
  && \left.+ \sum\limits_{\theta=1}^{\Theta} 8M_h \left(\frac{(2\theta+3)^{r+1}}{(r+1)!}+ \frac{(2\theta+3)^{s+1}}{(s+1)!}+ \frac{(2\theta+3)^{r+s+2}}{(r+1)!(s+1)!} \right)	\frac{\left(3\sqrt{2}\right)^{tu}}{(2\theta-1)^{tu}} \right) \\
		& \leq & \Delta l_{\max}^{\delta_{\min}} \left(M_h\left(\frac{1}{(r+1)!}+ \frac{1}{(s+1)!}+ \frac{1}{(r+1)!(s+1)!} \right)\right.\\
  && \left.+ (8M_h)\left(3\sqrt{2}\right)^{tu} \sum\limits_{\theta=1}^{\Theta}  \left(\frac{(2\theta+3)^{r+1}}{(r+1)!}+ \frac{(2\theta+3)^{s+1}}{(s+1)!}+ \frac{(2\theta+3)^{r+s+2}}{(r+1)!(s+1)!} \right)	\frac{1}{(2\theta-1)^{tu}} \right)\\
  		& \leq & \Delta l_{\max}^{\delta_{\min}} \left(M_h\left(\frac{1}{(r+1)!}+ \frac{1}{(s+1)!}+ \frac{1}{(r+1)!(s+1)!} \right)\right.\\
  && \left.+ 8M_h\frac{\left(3\sqrt{2}\right)^{tu}}{\delta_{\min}!} \left(\sum\limits_{\theta=1}^{\Theta}\frac{(2\theta+3)^{r+1}}{(2\theta-1)^{tu}}+ \sum\limits_{\theta=1}^{\Theta}\frac{(2\theta+3)^{s+1}}{(2\theta-1)^{tu}}+ \sum\limits_{\theta=1}^{\Theta}\frac{(2\theta+3)^{r+s+2}}{(2\theta-1)^{tu}} \right)	 \right).
\end{eqnarray*}
The series $\sum\limits_{\theta=1}^{\infty}  \frac{(2\theta+3)^{r+1}}{(2\theta-1)^{tu}}$, $\sum\limits_{\theta=1}^{\infty} \frac{(2\theta+3)^{s+1}}{(2\theta-1)^{tu}}$ and $\sum\limits_{\theta=1}^{\infty} \frac{(2\theta+3)^{r+s+2}}{(2\theta-1)^{tu}}$ converge for $u>\frac{3+r+s}{t}$, then we conclude that the approximation order of $\mathcal{MS}_u[f](\boldsymbol{x})$ is $(l_{\max})^{\delta_{\min}}$.
\end{proof}

\section{Algorithm}
\label{Sec4}
From now on, we assume that the points $(x_{\mu},y_{\nu}) \in \mathcal{X}_m\times\mathcal{Y}_n$ are collected in two matrices $X, Y\in \mathbb{R}^{n\times m}$ such that $X(\nu,\mu)=x_{\mu}$ and $Y(\nu,\mu)=y_{\nu}$, $\nu=1,\dots,n,$ $\mu=1,\dots,m$. In correspondence, the elevation $z_{\mu \nu}=f(x_{\mu},y_{\nu})$ are collected in a matrix $Z \in \mathbb{R}^{n\times m}$ such that $Z(\nu,\mu)=z_{\mu \nu}$. Starting from the matrices $X, \ Y$ and $Z$ we introduce the vectors $\boldsymbol{x}, \ \boldsymbol{y}$ and $\boldsymbol{z}$ as follows 
\begin{eqnarray*}
X(\nu,\mu)&\rightarrow& \boldsymbol{x}((\mu-1)n+\nu), \\
Y(\nu,\mu)&\rightarrow& \boldsymbol{y}((\mu-1)n+\nu), \\
Z(\nu,\mu)&\rightarrow& \boldsymbol{z}((\mu-1)n+\nu),
\end{eqnarray*}
$\nu=1,\dots,n$, $\mu=1,\dots,m$. After this setting the point $(x_{(k-1)r+1},y_{(\ell-1) s+1})$ which constitutes the left bottom vertex of the subset $\sigma_{k,\ell}$ defined in \eqref{sigma_kell} (see figure \ref{grid covering}) corresponds to the index 
\[
i(k,\ell)=(k-1)rn+(\ell-1)s+1, \quad k=1,\dots,K, \ \ell=1,\dots,L.
\]
Therefore, the nodes of $\sigma_{k,\ell}$, are
\[(x_{(k-1)r+i},y_{(\ell-1)s+j})=(\boldsymbol{x}(i(k,\ell)+(i-1)n+j-1)),\boldsymbol{y}(i(k,\ell)+(i-1)n+j-1)), \]
$k=1,\dots,K$, $\ell=1,\dots,L$, $i=1,\dots,r+1$ and $j=1,\dots,s+1$. We denote by $(x_{k,\ell},y_{k,\ell})$ the barycenter of the subset $\sigma_{k,\ell}$, by
$\boldsymbol{a}_{k,\ell}$ the vector of coefficients of polynomial $p_{k,\ell}=p_{k,\ell}(x,y)$ defined in \eqref{pol:tens}, based on the nodes of $\sigma_{k,\ell}$ and expressed as in~\eqref{pol:tens}.

\begin{algorithm}
\caption{Multinode Shepard (for rectangular grids)}\label{alg:tolerance}
\begin{algorithmic}
\Require $X$, $Y$, $Z$, $r$, $s$, $u$, $x$, $y$
\Ensure $\mathcal{MS}_{u}$
\State  $[m,n]\gets \text{size}(X)$
\State  $K\gets \operatorname{div}(m-1,r)$
\State $L\gets \operatorname{div}(n-1,s)$
\For{$\mu=1, \dots, m$}
\For{$\nu=1, \dots, n$}
\State $\boldsymbol{x}((\mu-1)n+\nu)\gets X(\nu,\mu)$
\State $\boldsymbol{y}((\mu-1)n+\nu)\gets Y(\nu,\mu)$
\State $\boldsymbol{z}((\mu-1)n+\nu)\gets Z(\nu,\mu)$
\State $D((\mu-1)n+\nu)\gets\left((x-\boldsymbol{x}((\mu-1)n+\nu))^2+(y-\boldsymbol{y}((\mu-1)n+\nu))^2\right)^{-\frac{u}{2}}$
\EndFor
\EndFor
\State $Num\gets 0$
\State $Den\gets 0$
\For{$k=1, \dots, K$}
\For{$\ell=1, \dots, L$}
\State $d_{k,\ell}\gets \prod\limits_{i=1, j=1}^{r+1,s+1} D(i(k,\ell)+(i-1)n+j-1)$
\State Compute $(x_{k,\ell},y_{k,\ell})$
\State Compute $\boldsymbol{a}_{k,\ell}$
\State Compute $p_{k,\ell}$
\State $Num\gets Num+d_{k,\ell} \cdot p_{k,\ell}$
\State $Den\gets Den+d_{k,\ell}$
\EndFor
\EndFor
\State
$\mathcal{MS}_{u}\gets {Num}/{Den}$
\end{algorithmic}
\end{algorithm}

\section{Use case analysis}
\label{Sec5}
To study the performance of the method developed in this study, the algorithm has been applied to a real digital terrain model (DTM for short) defined as a regular grid. The geographical area is located in a mountainous area of Granada (South of Spain) within the Penibetic mountain range of Sierra Nevada (Figure~\ref{OrtoimagenPendientes} (left)).

\begin{figure}[h!]
\centering
\includegraphics[scale=0.25]{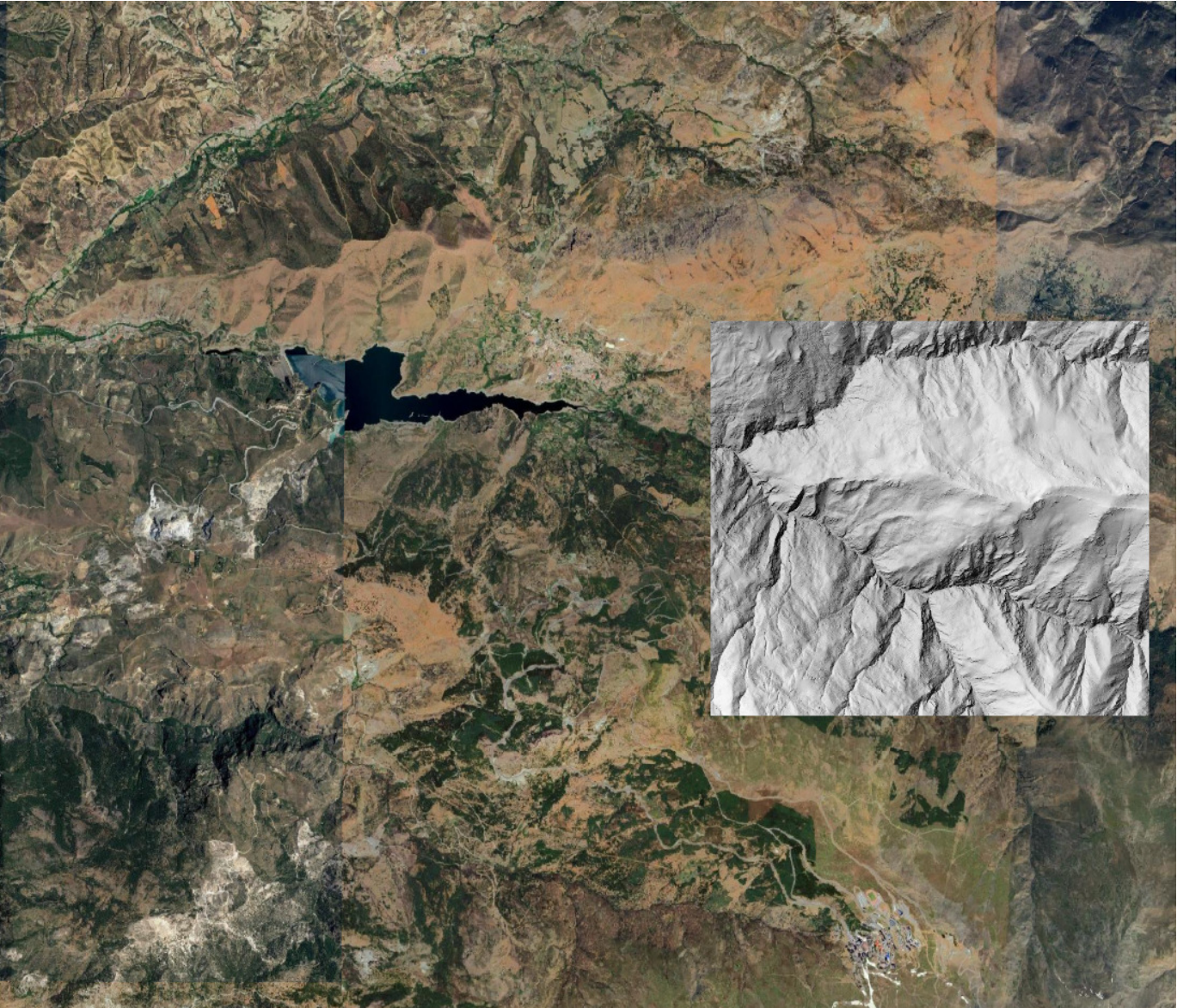}
\includegraphics[scale=0.3]{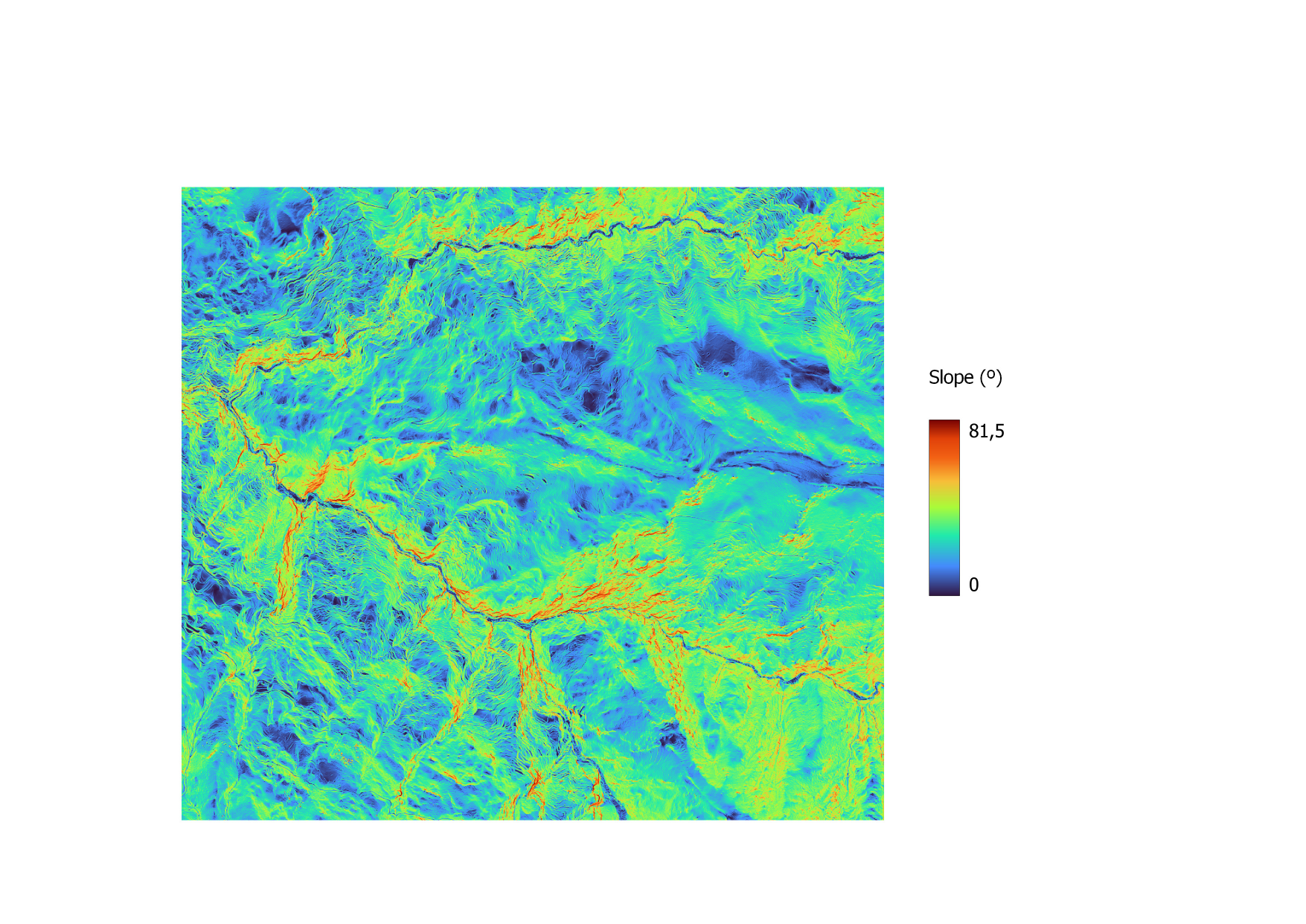}
\caption{Orthophoto of the region including the area of interest and its slope map.}%
\label{OrtoimagenPendientes}%
\end{figure}


The reference DTM, DTM$_{\text{ref }2\times2}$, has been developed by the Instituto Geográfico Nacional (IGN) as the Spanish Cartographic Agency and consists of a $2\operatorname{m}$ resolution raster model. The extent of the study area is $5420
\operatorname{m}\times 4886\operatorname{m}$ with a difference in altitude of $1055 \operatorname{m}$. It is a mountainous area with an average slope equal to $28\%$ and minimum and maximum values equal to $0\%$ and $81.5\%$, respectively (Figure~\ref{OrtoimagenPendientes} (right)).


Figure~\ref{texturizado y sombras2} provides a 3D perspective view of the DTM to better appreciate its topography, as well as its hill-shade representation.
\begin{figure}[h!]
\centering
\includegraphics[scale=0.17]{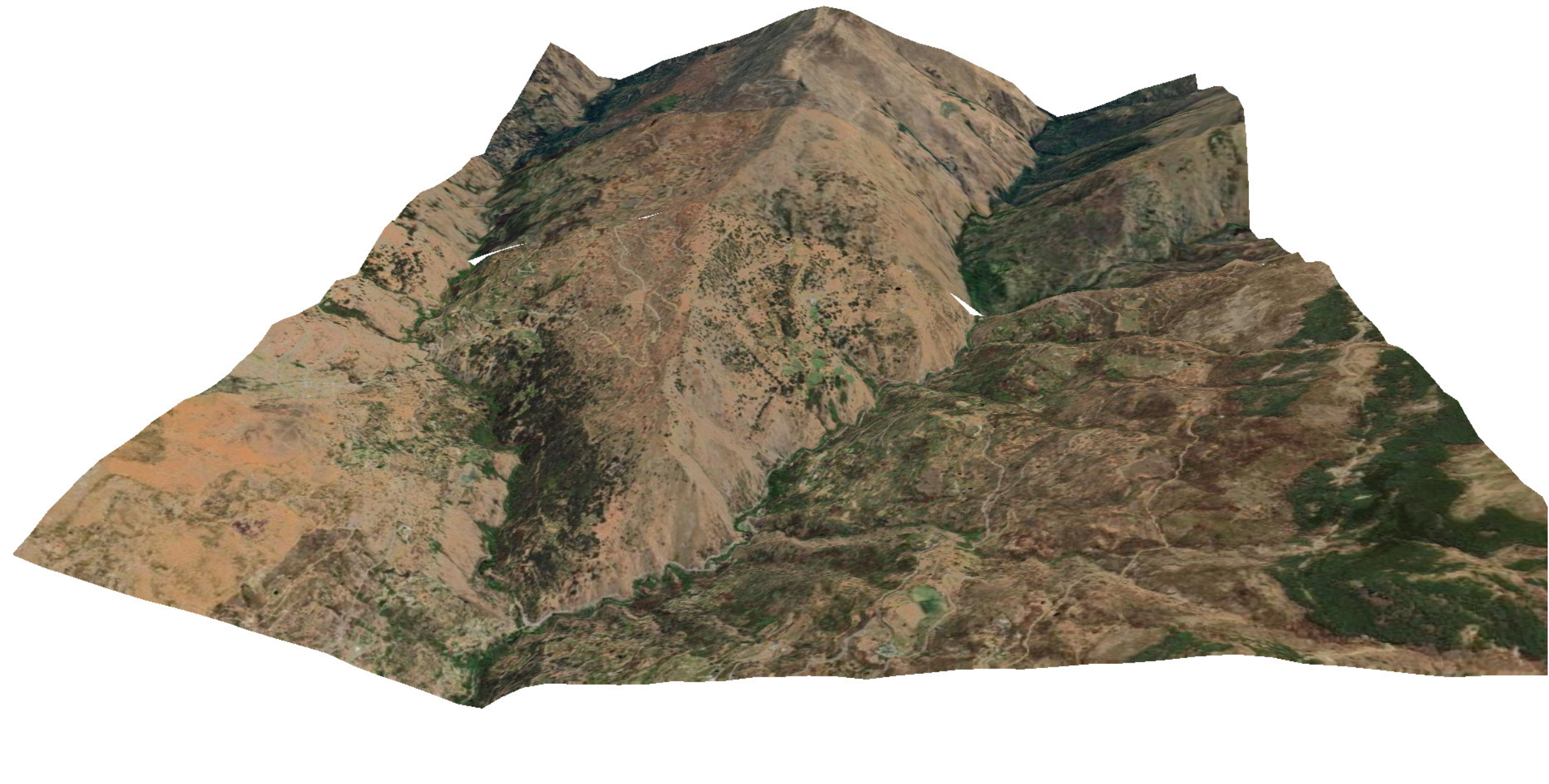}
\includegraphics[scale=0.17]{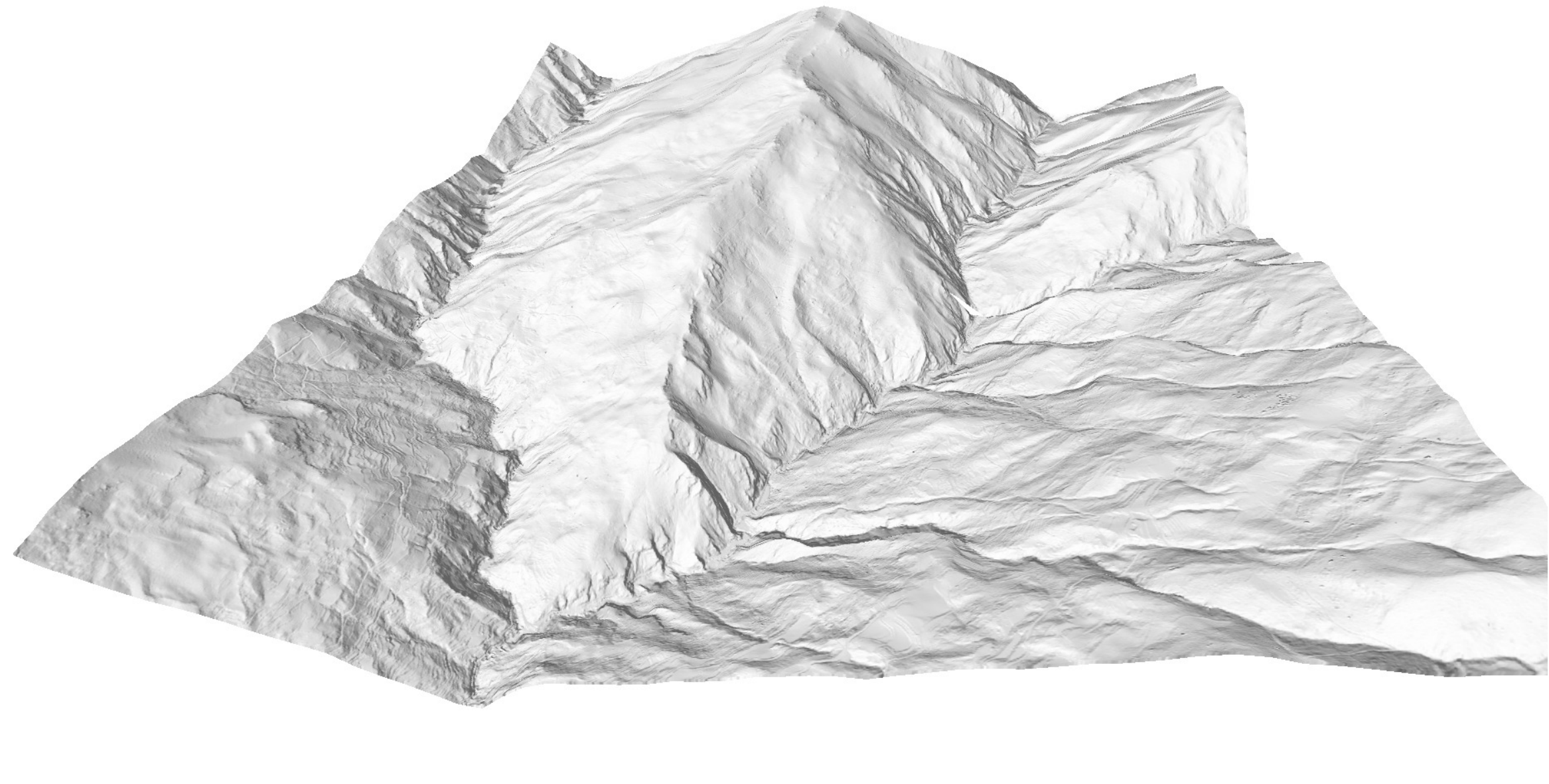}
\caption{On the left, 3D textured image corresponding to DTM$_{\text{ref
}2\times2}$. On the right, its hill-shade representation.}%
\label{texturizado y sombras2}%
\end{figure} 


The approximation method developed in this study was compared with the cubic B-spline resampling technique, a standard approach implemented in the open-source Quantum Geographic Information System (QGIS) \cite{qgis}. QGIS is a powerful open source platform extensively utilized in DEM analysis. It allows for detailed terrain assessment, including slope and aspect calculation, hydrological modeling and 3D visualization. QGIS provides support for a wide range of DEM datasets, including SRTM and LiDAR, and offers tools for interpolation, hill shading and watershed delineation, for more details see \cite{qgis_paper}. The QGIS software provides spline interpolation techniques through the GRASS GIS plugin, namely through the v.surf.bspline module \cite{grass}, which implements the cubic B-spline interpolation with Tikhonov regularisation to guarantee numerical stability. The interpolation procedure consists of two main steps: first, the linear coefficients of a spline function - made up of piecewise polynomials joined with smoothness constraints - are estimated from the input data points by least-squares regression; second, the interpolated surface is computed from these coefficients. The choice of the spline knots is crucial for optimal performance; in the case of regularly distributed data points, setting the spline step length equal to the maximum distance between points in the x- and y-directions generally gives satisfactory results. For further details, see \cite{grass_paper}.

Accuracy analyzes were performed independently on the horizontal and vertical components of the DTM. This accuracy assessment methodology has already been successfully applied by Ariza-López et al. 2022 (see~\cite{Ariza-López2023}) using the bicubic method provided by QGIS.
The performance analysis of the proposed method follows the following workflow:
\begin{enumerate}
    \item The original DTM (DTM$_{\text{ref }2\times2}$) of $2\operatorname{m}$ resolution is downscaled to a cell resolution of $8 \operatorname{m}$ and $16\operatorname{m}$ respectively, resulting in two downscaled DTMs of $8\operatorname{m}$ and $16\operatorname{m}$ (DTM$_{\text{up\_8m}}$ and DTM$_{\text{up\_16m}}$). The reduction method is called upscaling.
    \item The reduction method is called upscaling. For each of these two DTMs we have applied the biquadratic multinode Shepard operator with parameter $\mu=4$ to construct the corresponding interpolants, resulting in two surfaces (S$_{\text{up\_8m}}$ and S$_{\text{up\_16m}}$) whose elevations can be evaluated over the entire DTM$_{\text{ref }2\times2}$ domain, particularly at the points of the mesh multiples of 2, which are the original equispaced $(x,y,z)$ data.
    \item  S$_{\text{up\_8m}}$ and S$_{\text{up\_16m}}$ are evaluated at the points whose coordinates $(x,y,z)$ coincide with those of the $\text{DTM}_{ori\_2m}$  mesh giving rise to DTM$_{\text{dow\_Sh\_8m\_to\_2m}}$ and DTM$_{\text{dow\_Sh\_16m\_to\_2m}}$, which would be similar to a downscaling process in the field of raster resampling. In this way, the  DTM$_{\text{ref }2\times2}$  coordinates can be directly compared with those of DTM$_{\text{dow\_Sh\_8m\_to\_2m}}$ and DTM$_{\text{dow\_Sh\_16m\_to\_2m}}$ to determine the quality of the approximation provided by the proposed method. This comparison will provide a measure of the vertical accuracy of the method used.
    \item The analysis of horizontal accuracy is based on the following idea. Let us imagine that a reference DTM (Figure~\ref{explicacion}(a) and an approximate DTM (Figure~\ref{explicacion}(b)) are given.
    \begin{figure}[h!]
    \centering
    \includegraphics[scale=0.5]{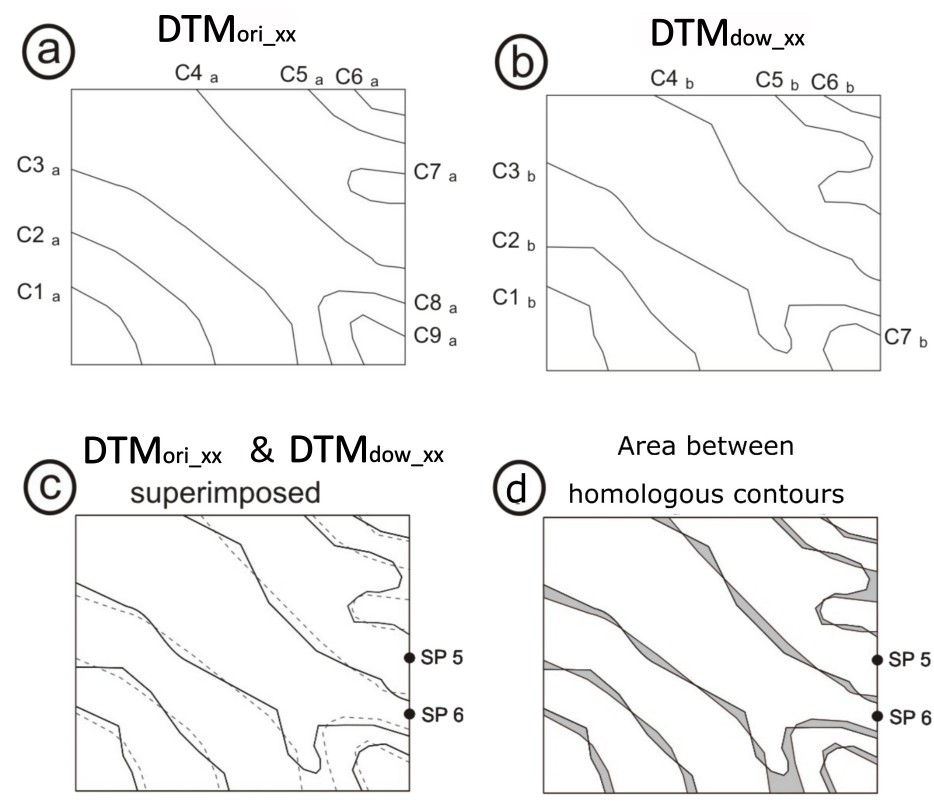} 
    \caption{Illustrating how to measure the horizontal accuracy of an approximate DTM with respect to a reference DTM.}%
    \label{explicacion}%
    \end{figure}
    Their sets of contours are available. After superimposing both figures, the algorithm proposed in \cite{Reinoso} 
    is used to determine the homologous contours, calculate the area of the regions determined by them and compute the ratio between that area and the average length of the homologous contours. Therefore, all that needs to be done is to calculate the same set of contour lines of user-defined elevation in both DTM$_{\text{ref }2\times2}$ and its counterparts DTM$_{\text{dow\_Sh\_8m\_to\_2m}}$ and DTM$_{\text{dow\_Sh\_16m\_to\_2m}}$,
   resulting in comparable contour maps CL$_{\text{ref }2\times2}$,  CL$_{\text{dow\_Sh\_8m\_to\_2m}}$ and CL$_{\text{dow\_Sh\_16m\_to\_2m}}$. The horizontal discrepancy ($H_d$) will be calculated as the total area between homologous contour lines ($\sum A_{H_c}$) divided by the average length of all contour lines ($\sum C_{ref} + \sum C_{dow}) / 2$:
\[
H_d = \frac{\sum A_{H_c}}{\left(\frac{\sum C_{ref} + \sum C_{dow}}{2}\right)}.
\]
As an example, Figure~\ref{Curvas3Dperspectiva} shows an orthogonal view of homologous contours associated with CL$_{\text{ref }2\times2}$ and CL$_{\text{dow\_Sh\_16m\_to\_2m}}$ as well as an enlarged detail showing in green the area between homologous contours illustrating the horizontal discrepancy between the DTM$_{\text{ref }2\times2}$ and DTM$_{\text{dow\_Sh\_16m\_to\_2m}}$. A 3D perspective of the detail area is also shown.
\begin{figure}
    \centering
    \includegraphics[width=0.5\linewidth]{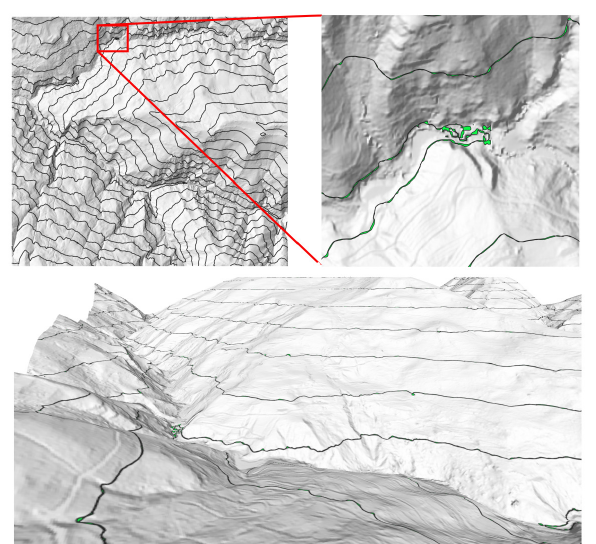}
    \caption{Horizontal discrepancy between homologous contours in CL$_{\text{ref }2\times2}$ and CL$_{\text{dow\_cBspl\_16m\_to\_2m}}$. The bottom image shows a 3D perspective view.}
    \label{Curvas3Dperspectiva}
\end{figure}
\item In order to compare the results of the interpolation method with other consolidated methods, steps 2 to 5 have been repeated using the above cubic B-spline method, to produce the contour line maps
CL$_{\text{dow\_cBspl\_8m\_to\_2m}}$ and
CL$_{\text{dow\_cBspl\_16m\_to\_2m}}$, which will be compared with the reference CL$_{\text{ref }2\times2}$.
\end{enumerate}
\section*{Results}
There is no visual difference between the
hill-shade figure of the original DTM and those of the surfaces S$_{\text{up\_8m}}$ and S$_{\text{up\_16m}}$ constructed from a much smaller number of points, even though the spacing is much wider.

The differences between the altitudes of the original DTM and those of the DTMs calculated from the multinode Shepard method and those provided by the QGIS cubic B-spline method have been calculated. To avoid trade-offs between positive and negative accuracy values, all vertical discrepancies have been taken in absolute value. Table~\ref{Tab1} shows the mean and standard deviation of the results.


 It is worth noting that the biquadratic multinode Shepard method produces very accurate results even with widely spaced input data. In our application, it effectively reconstructs high-resolution terrain features 2m apart from coarsely sampled elevation data spaced 16m apart in the x- and y- directions, highlighting its suitability for DEM downscaling applications.

\begin{table}[tbp]
\centering%
\begin{tabular}
[c]{|c|c|c|c|c|}\hline
Vertical & \multicolumn{2}{|c|}{Multinode Shepard} &
\multicolumn{2}{|c|}{{Cubic B-spline (QGIS)}}\\\cline{2-5}%
Accuracy & Mean ($\operatorname{m}$) & Sd ($\operatorname{m}$) & Mean
($\operatorname{m}$) & Sd ($\operatorname{m}$)\\\hline
DTM$_{\text{dow\_Sh\_8m\_to\_2m}}$ & $0.277$ & $0.420$ & $0.596$ &
$0.606$\\\hline
DTM$_{\text{dow\_Sh\_16m\_to\_2m}}$ & $0.549$ & $0.789$ & $0.886$ &
$1.018$\\\hline
\end{tabular}
\caption{Vertical accuracy measured as discrepancy comparing the biquadratic multinode Shepard method and the cubic B-spline method in QGIS.}%
\label{Tab1}%
\end{table}

Figure~\ref{manchas} (top left) shows in the orthogonal projection of the hill-shade representation associated with S$_{\text{up\_16m}}$ the points where the error is greater than $3$ meters after applying the biquadratic multinode Shepard operator and the cubic B-spline method (in red the errors due to QGIS cubic B-spline interpolation and in green those corresponding to the Shepard method). As expected, they occur in areas of steep slope. 3D perspectives are also presented to better appreciate the differences and overlaps, as well as the fact that these discrepancies occur in rough areas with steep slopes. 

\begin{figure}[tbp]
\centering
\includegraphics[scale=0.2]{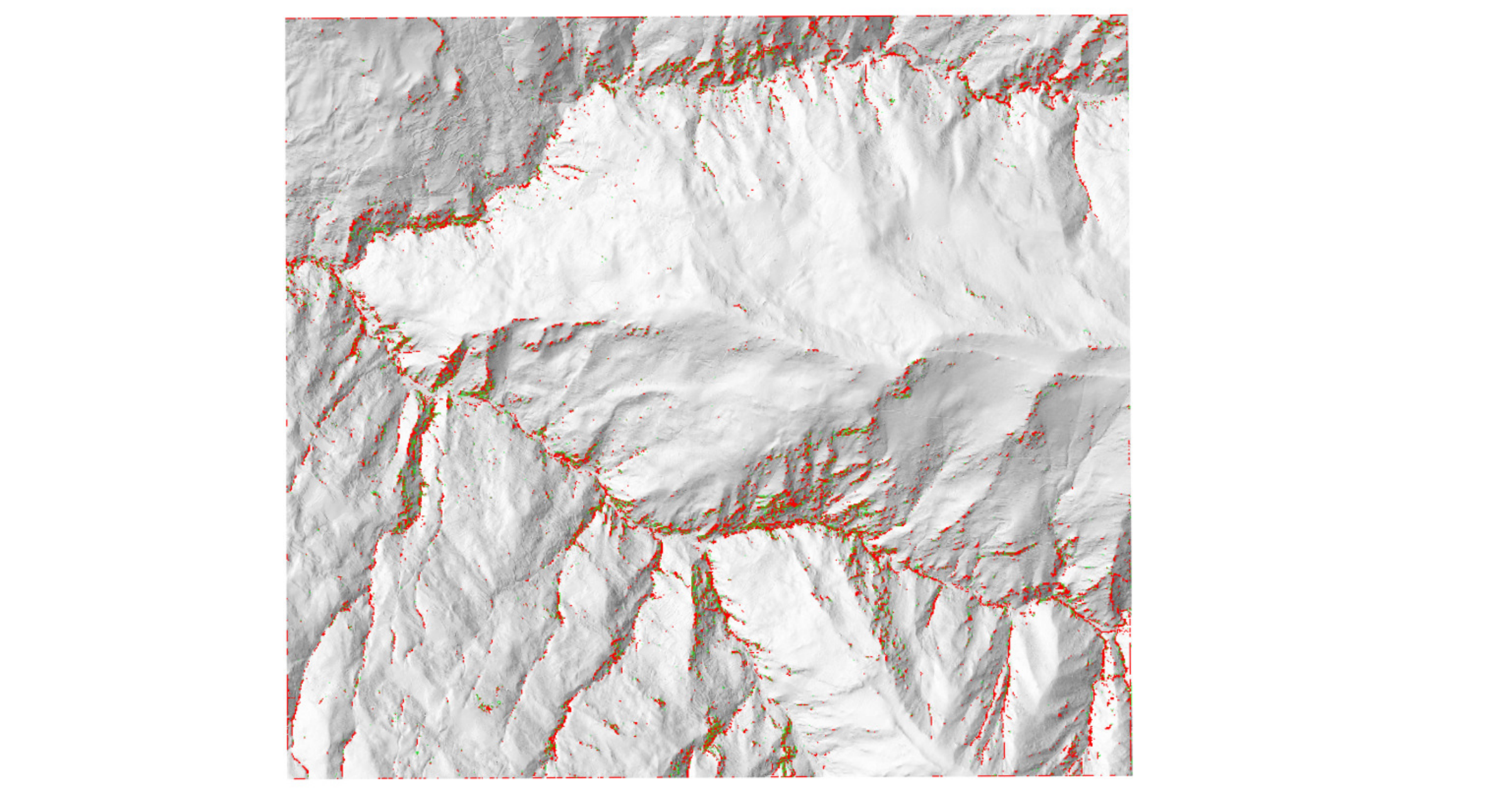} \hspace*{-1.5cm}
\includegraphics[scale=0.18]{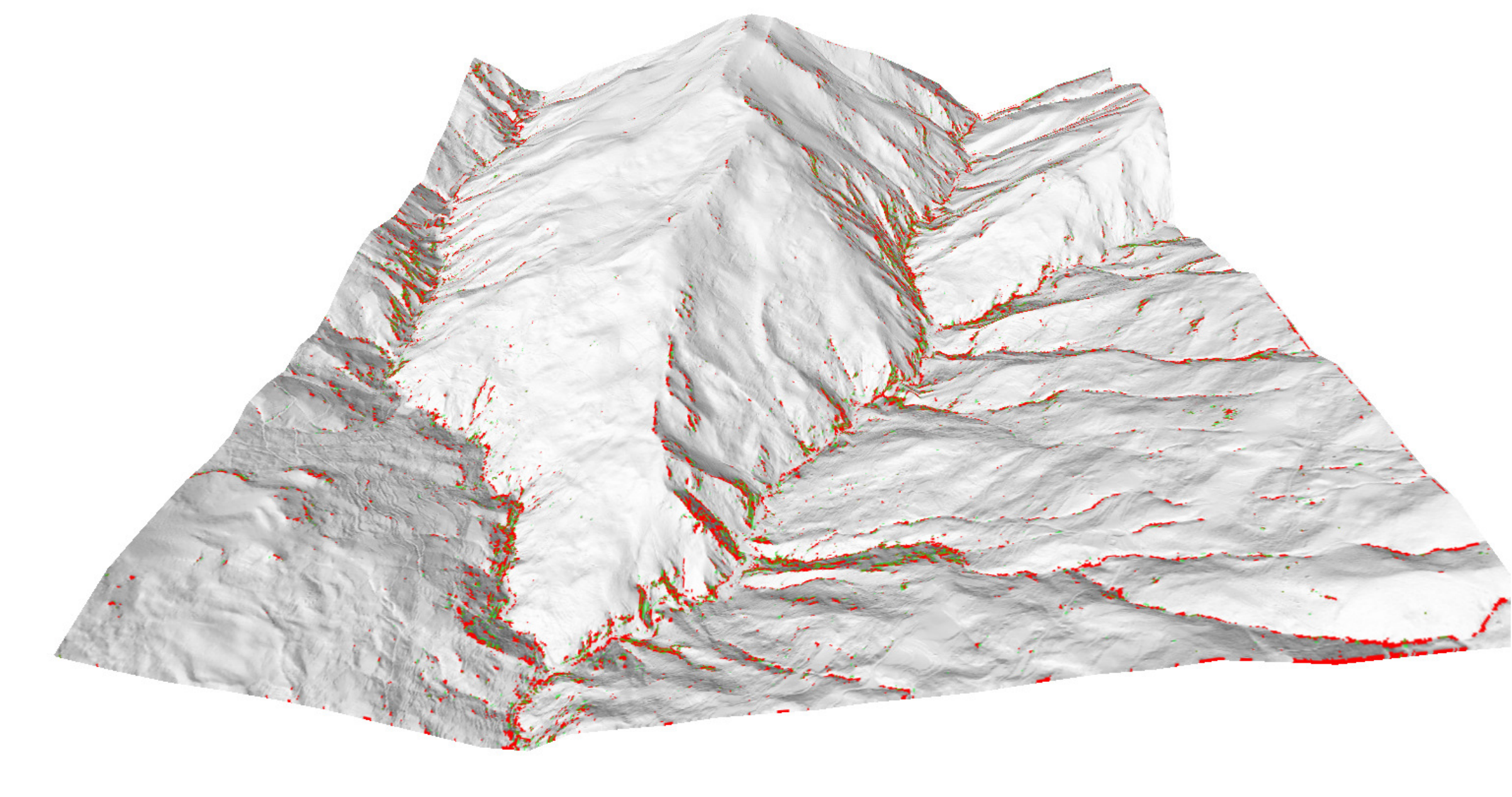}
\includegraphics[scale=0.15]{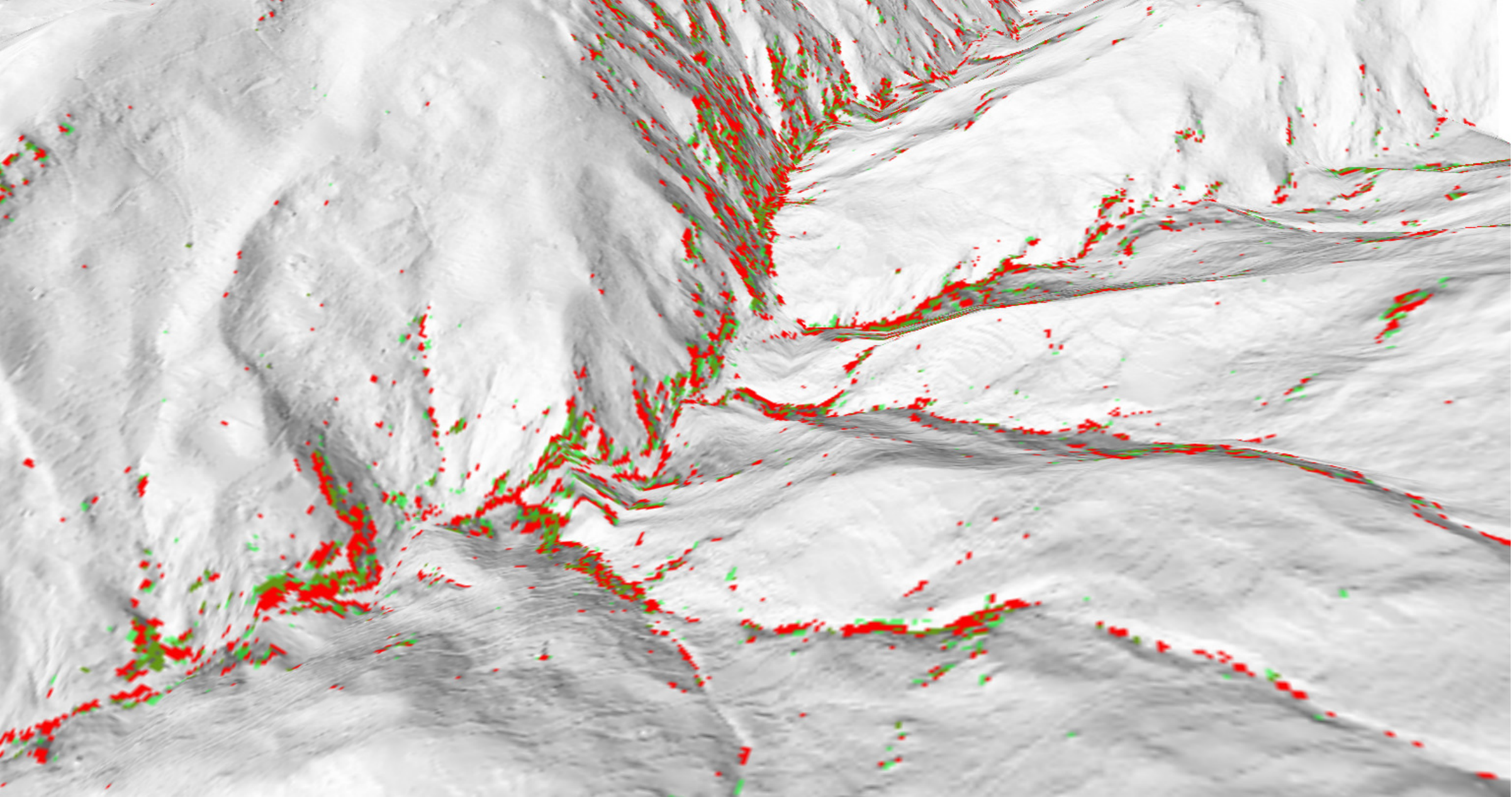}
\caption{Vertical accuracy measured as discrepancy comparing Shepard and cubic B-spline methods.}%
\label{manchas}%
\end{figure}

When using DTM$_{\text{dow\_Sh\_8m\_to\_2m}}$ the areas where the largest errors occur are very similar to those shown in Figure~\ref{manchas}, but a lower threshold has to be selected to get a similar amount of area covered by red and green areas.

Regarding the horizontal accuracy, Table~\ref{Tab2} shows the mean and standard deviation for both scenarios DTM$_{\text{up\_8m}}$ and DTM$_{\text{up\_16m}}$. Also in this case the performance of the multinode biquadratic Shepard method compared to the cubic B-spline implemented in QGIS is remarkable.

\begin{table}[tbp]
\centering%
\begin{tabular}
[c]{|c|c|c|c|c|}\hline
Horizontal & \multicolumn{2}{|c|}{Multinode Shepard} &
\multicolumn{2}{|c|}{{Cubic B-spline (QGIS)}}\\\cline{2-5}%
Accuracy & Mean ($\operatorname{m}$) & Sd ($\operatorname{m}$) & Mean
($\operatorname{m}$) & Sd ($\operatorname{m}$)\\\hline
MCN$_{\text{dow\_8m\_to\_2m}}$ & $0.227$ & $0.050$ & $0.521$ & $0.060$\\\hline
MCN$_{\text{dow\_16m\_to\_2m}}$ & $0.461$ & $0.096$ & $0.790$ & $0.129$%
\\\hline
\end{tabular}
\caption{Horizontal accuracy measured as discrepancy comparing the biquadratic
multinode Shepard method and the cubic B-spline method in QGIS}
\label{Tab2}
\end{table}

\section*{Acknowledgments}
The authors are grateful to the anonymous reviewers for carefully reading the
manuscript and for their precise and helpful suggestions which allowed to improve the work.
\begin{enumerate}
\item This research has been achieved as part of RITA \textquotedblleft Research
 ITalian network on Approximation'' and as part of the UMI group \enquote{Teoria dell'Approssimazione
 e Applicazioni}. The research was supported by GNCS-INdAM 2025 project \lq\lq Polinomi, Splines e Funzioni Kernel: dall’Approssimazione Numerica al Software Open-Source\rq\rq. The work of F. Nudo has been funded by the European Union – NextGenerationEU under the Italian National Recovery and Resilience Plan (PNRR), Mission 4, Component 2, Investment 1.2 \lq\lq Finanziamento di progetti presentati da giovani ricercatori\rq\rq,\ pursuant to MUR Decree No. 47/2025.

\item Project PID2022-139586NB-44 funded by
MCIN/AEI/10.13039/501100011033 and by European Union NextGenerationEU/PRTR.

\item Project QUAL21-011 (Modeling Nature) of the Consejería de Universidad, Investigación e Innovación of the Junta de Andalucía, Spain, and

\item The “María de Maeztu” Excellence Unit IMAG, reference CEX2020-001105- M, funded by \newline MCINN/AEI/10.13039/501100011033/ CEX2020-001105-M.
\end{enumerate}

\bibliographystyle{elsarticle-num}
\bibliography{references.bib}

\end{document}